\newtheorem{theorem}{Theorem}
\newtheorem{definition}{Definition}
\newtheorem{proposition}{Proposition}
\newtheorem{example}{Example}
\newtheorem{corollary}{Corollary}
\newtheorem{remark}{Remark}
\newtheorem{proof}{Proof}
\newcommand{\R}{\mathbb {R}}
\begin{document}

\begin{frontmatter}

\title{Time Reparametrization, Not Fractional Calculus: A Reassessment of the Conformable Derivative}

\author[label1]{Aziz El Ghazouani}
\author[label1]{Fouad Ibrahim Abdou Amir}
\author[label1]{Khoulane Mohamed}
\author[label1]{M'hamed Elomari}
\affiliation[label1]{organization={Laboratory of Applied Mathematics and Scientific Computing, Sultan Moulay Slimane University},
            city={Beni Mellal},
            postcode={23000}, 
            country={Morocco}}

\begin{abstract} The conformable derivative has been promoted in numerous publications as a new fractional derivative operator. This article provides a critical reassessment of this claim. We demonstrate that the conformable derivative is not a fractional operator but a useful computational tool for systems with power-law time scaling, equivalent to classical differentiation under a nonlinear time reparametrization. Several results presented in the literature as novel fractional contributions can be reinterpreted within a classical framework. We show that problems formulated using the conformable derivative can be transformed into classical formulations via a change of variable. The solution is derived classically and then transformed back, this reformulation highlights the absence of genuinely nonlocal fractional effects. We provide a theoretical analysis, numerical simulations comparing conformable, classical, and truly fractional (Caputo) models, and discuss the reasons why this misconception persists. Our results suggest that classical derivatives, as well as established fractional derivatives,
offer a more faithful framework for modeling memory-dependent phenomena. 
\end{abstract}

\begin{keyword} Conformable Derivative, Fractional Calculus, Time Reparametrization, Caputo Derivative, Dynamical Systems.
\end{keyword}
\end{frontmatter}
\section{Introduction}
Despite the substantial volume of research on fractional calculus, there is no single universally accepted definition of a fractional derivative. Various definitions of fractional differential operators have been reviewed (see e.g., \cite{Giusti,Kilbas,Ortigueira,Ortigueira2,Ortigueira3,Tarasov,Tarasov2,de}). These classical fractional derivatives (e.g., Riemann-Liouville, Caputo) are non-local operators capturing memory and hereditary effects, which is their defining characteristic.

It is proven in \cite{Ortigueira2} that the fractional differential operators of Grünwald-Letnikov, Riemann-Liouville, and Caputo share a set of properties that one might expect from a differential operator to be considered fractional. These properties are:
\begin{itemize}
\item[(i)] Linearity
\item[(ii)] The zero-order operator is the identity
\item[(iii)] Integer-order operators give the ordinary derivative
\item[(iv)] Index law and commutativity
\item[(v)] The generalized Leibniz rule for the derivative of a product:
$$
D^\alpha(f(x) g(x))=\sum_{n=0}^{\infty}\binom{\alpha}{n} D^n f(x) D^{\alpha-n} g(x)
$$
\end{itemize}

In an attempt to define a fractional derivative that inherits as many characteristics as possible from the classical derivative, Khalil et al. \cite{3} proposed to define the "conformable fractional derivative" $T_\alpha f$, for $0<\alpha<1$, of a function $f:[0, \infty[\longrightarrow \mathbb{R}$ by
\begin{equation}\label{khalil1}
T_\alpha f(x):=\lim _{\varepsilon \rightarrow 0} \frac{f\left(x+\varepsilon x^{1-\alpha}\right)-f(x)}{\varepsilon}, \quad x>0
\end{equation}
provided the limit exists, in which case $f$ is said to be $\alpha$-differentiable. Moreover, if $f$ is $\alpha$-differentiable on $] 0, a[$ for some $a>0$, then
 $$T_\alpha f(0):=\lim _{x \rightarrow 0^{+}} T_\alpha f(x).$$

It is also shown (\cite{3}, Theorem 2.2) that if a function $f$ is differentiable at a point $x>0$, then $T_\alpha f(x)$ exists and
\begin{equation}\label{khalil2}
T_\alpha f(x)=x^{1-\alpha} f^{\prime}(x)
\end{equation}

It is emphasized in \cite{Ortigueira2} that the conformable derivative \eqref{khalil1} lacks properties (ii) and (iv). This means that the conformable derivative is a useful computational tool for systems with power-law time scaling, but it is not a fractional operator.

The fact that the conformable $\alpha$-derivative is not a fractional derivative has already been implicitly pointed out by Tarasov in \cite{Tarasov} where he proves that the violation of the Leibniz rule, $D^{\alpha}(uv) = u D^{\alpha}v + vD^{\alpha} u $, is necessary for the order $\alpha$ of a differential operator $ D^{\alpha} $ to be fractional. The conformable operator $T_\alpha$ does satisfy the Leibniz rule (\cite{3}, Theorem 2.2).

Despite this, a significant literature has emerged, presenting the conformable derivative as a new type of fractional derivative and claiming new fractional results \cite{3, abdeljawad2015}. This article aims to critically reassess these claims. We show that the conformable framework offers no authentic extension of differential calculus but rather provides a simple time reparametrization. All significant results obtained via the conformable derivative can be systematically recovered by appropriate application of a coordinate transformation, demonstrating that the framework does not appear to provide an extension of differential calculus in the sense traditionally associated with fractional operators.

In this article, we review and formalize the fundamental equivalence between conformable differentiation and classical differentiation under a change of variable; provide explicit numerical benchmarks contrasting the behavior of conformable, classical, and truly fractional (Caputo) models, highlighting the marked difference; demonstrate through theoretical analysis and examples that the behaviors interpreted as fractional can be explained by a nonlinear time reparametrization; discuss the reasons behind the persistence of this misconception in the literature; and conclude with recommendations for the appropriate use and terminology of the conformable operator.

The structure of this paper is organized as follows. In Section 2, we critically review the prevalent claims in the literature that misrepresent the conformable derivative as fractional. Section 3 is the theoretical core, where we rigorously formalize the fundamental equivalence between conformable differentiation and classical differentiation under a nonlinear change of variable, establishing it as a reparametrization rather than a novel calculus. Section 4 demonstrates the practical implications of this equivalence by reformulating a range of problems—including ordinary differential equations, partial differential equations, evolution equations, and chaotic dynamical systems—into their classical counterparts. Section 5 presents explicit numerical benchmarks, contrasting the dynamics of conformable models against both classical and true fractional (Caputo) models to visually underscore the critical differences between simple time-scaling and genuine non-local memory effects. Finally, we synthesize our findings, discuss the reasons for the persistence of this misconception, and provide clear recommendations for the appropriate use and terminology of the conformable operator.

\section{On the Interpretation of the Conformable Derivative}
The literature on the conformable derivative often presents it as a legitimate fractional derivative. For example, Khalil et al. \cite{3} introduce it as "a new definition of the fractional derivative" and derive basic properties similar to those of the ordinary derivative. Abdeljawad \cite{abdeljawad2015} builds on this work, elaborating a complete framework of "conformable fractional" integral and derivative operators, and solves differential equations claiming new fractional solutions.

The common thread in these works and many others is the presentation of results derived from operators like $T_\alpha$ or $D_{\psi}^{\alpha}$ as new discoveries in fractional calculus. These interpretations often overlook the fundamental equivalence:
\begin{equation}
D^{\alpha}_{\psi} f(t) = \psi_{\alpha}(t) f'(t).
\end{equation}
where $ \psi_{\alpha} $ is a scaling function (e.g., $ \psi_{\alpha}(t)=t^{1-\alpha}$).

This relation shows that any result obtained using the conformable derivative is essentially a classical result observed through the lens of a transformed time variable $\tau = \phi_{\alpha}(t) = \int_0^t \frac{1}{\psi_{\alpha}(s)} ds$. The resulting functions are not solutions to fractional equations in the traditional sense (involving non-local integrals) but are solutions to classical equations with a rescaled time argument.

This section aims to correct this widespread misconception by clearly demonstrating the classical nature of the conformable derivative and its consequent inability to model truly fractional phenomena.

\section{The Fundamental Equivalence: A Change of Variable}
The heart of our argument rests on a simple but powerful mathematical equivalence. The conformable derivative, in its various forms, is not a new fundamental operator but is intrinsically linked to the classical derivative by a change of variables.

Let $\alpha\in (0,1)$ and let $\psi_{\alpha}:\R_+^*\to \R_+^*$ be a bijective map, such that $\int_{0}^{t}{\frac{1}{\psi_{\alpha}(s)}ds}=\phi_{\alpha}(t).$

\begin{definition} Let \( f : [0, \infty) \to \mathbb{R} \). The conformable derivative of \( f \) of order \( \alpha \in (0,1) \) is defined by
\[
D_{\psi}^{\alpha}(f)(t) = \lim_{\varepsilon \to 0} \frac{f(t + \varepsilon \psi_{\alpha}(t)) - f(t)}{\varepsilon},
\]
for all \( t > 0 \).

If \( f \) is \( \alpha \)-differentiable on an interval \( (0, a) \), with \( a > 0 \), and if the limit \( \lim_{t \to 0^+} f^{(\alpha)}(t) \) exists, then we define
\[
f^{(\alpha)}(0) = \lim_{t \to 0^+} f^{(\alpha)}(t).
\]
\end{definition}

\begin{example}
\begin{enumerate}
\item If \(\psi_{\alpha}(t) = t^{1-\alpha}\), we obtain the derivative introduced by Khalil et al. \cite{3}.
\item If \(\psi_{\alpha}(t) = e^{(\alpha - 1)t}\), we obtain a derivative based on exponential scaling.
\item If \(\psi_{\alpha}(t) = \frac{t^{1-\alpha}}{\Gamma(\alpha+1)}\), we obtain another variant found in the literature.
\end{enumerate}
\end{example}

The key result, which establishes the equivalence, is stated in the following theorem:

\begin{theorem}\label{1} Let \( a \in [0, \infty) \). Then \( f \) is \( \psi_{\alpha} \)-differentiable at \( a \) if and only if the function \( g(\tau) = f(\phi_{\alpha}^{-1}(\tau)) \) is differentiable at \( \tau_a = \phi_{\alpha}(a) \). Moreover, the derivatives are related by:
\[
D^{\alpha}_{\psi} f(a) = g'(\tau_a) = f'(\phi_\alpha^{-1}(\tau_a)) \cdot (\phi_{\alpha}^{-1})'(\tau_a) = \psi_{\alpha}(a) f'(a)
\]
where $D^{\alpha}_{\psi}$ denotes the $\psi_\alpha$ derivative operator.
\end{theorem}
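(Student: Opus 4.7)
The plan is to reduce the statement to an elementary change of variable in the defining limit of $D^\alpha_\psi$, and then reinterpret the result through the classical chain rule applied to $f = g \circ \phi_\alpha$.

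First I would collect the regularity properties of $\phi_\alpha$. Because $\psi_\alpha$ takes values in $\R_+^*$, the formula $\phi_\alpha(t) = \int_0^t \frac{ds}{\psi_\alpha(s)}$ defines a strictly increasing, $C^1$ function on $(0,\infty)$ with $\phi_\alpha'(t) = 1/\psi_\alpha(t) > 0$. The inverse function theorem then yields that $\phi_\alpha^{-1}$ is differentiable on the image, with
$$(\phi_\alpha^{-1})'(\tau) = \frac{1}{\phi_\alpha'(\phi_\alpha^{-1}(\tau))} = \psi_\alpha(\phi_\alpha^{-1}(\tau)).$$
Specializing at $\tau_a = \phi_\alpha(a)$ already produces the rightmost equality in the theorem, $(\phi_\alpha^{-1})'(\tau_a) = \psi_\alpha(a)$.

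Next, for $a > 0$, I would perform the substitution $h = \varepsilon \psi_\alpha(a)$ directly in the definition
$$D^\alpha_\psi f(a) = \lim_{\varepsilon \to 0} \frac{f(a + \varepsilon \psi_\alpha(a)) - f(a)}{\varepsilon}.$$
Since $\psi_\alpha(a) > 0$ is a nonzero constant, $\varepsilon \to 0$ is equivalent to $h \to 0$, and the right-hand side rewrites as $\psi_\alpha(a)$ times the ordinary difference quotient of $f$ at $a$. This simultaneously proves the two-way equivalence between $\psi_\alpha$-differentiability and classical differentiability of $f$ at $a$, and the identity $D^\alpha_\psi f(a) = \psi_\alpha(a) f'(a)$. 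The remaining equalities follow from the classical chain rule applied to $f = g \circ \phi_\alpha$, which gives $f'(a) = g'(\tau_a) \phi_\alpha'(a) = g'(\tau_a)/\psi_\alpha(a)$, hence $g'(\tau_a) = \psi_\alpha(a) f'(a)$. The equivalence between differentiability of $g$ at $\tau_a$ and of $f$ at $a$ then follows automatically from $\phi_\alpha$ being a $C^1$ diffeomorphism of $(0,\infty)$ onto its image.

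The hard part will be the boundary case $a = 0$, because for the canonical choice $\psi_\alpha(t) = t^{1-\alpha}$ one has $\psi_\alpha(0) = 0$ and $\phi_\alpha'(0) = +\infty$, so neither the substitution $h = \varepsilon \psi_\alpha(a)$ nor the inverse function theorem applies verbatim at the origin. My plan there is to fall back on the convention $D^\alpha_\psi f(0) := \lim_{t \to 0^+} D^\alpha_\psi f(t)$ stated just above the theorem, and pass to the one-sided limit $t \to 0^+$ in the already-established interior identity $D^\alpha_\psi f(t) = \psi_\alpha(t) f'(t) = g'(\phi_\alpha(t))$; the existence of this limit is precisely the hypothesis that makes $f^{(\alpha)}(0)$ well-defined. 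I would also spell out a mild regularity assumption on $\psi_\alpha$ near $0$ (positivity on $(0,\infty)$ plus local integrability of $1/\psi_\alpha$) to guarantee that $\phi_\alpha$ is a genuine bijection of the half-line onto $[0,\phi_\alpha(\infty))$, which is implicit in the bijectivity hypothesis but worth making explicit.
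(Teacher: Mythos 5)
Your proposal is correct, but it is organized differently from the paper's proof, and the difference is worth noting. The paper works directly with $g$: it writes $f(t+\varepsilon\psi_\alpha(t)) = g(\phi_\alpha(t+\varepsilon\psi_\alpha(t)))$, expands $\phi_\alpha(t+\varepsilon\psi_\alpha(t)) = \tau + \varepsilon + o(\varepsilon)$ using $\phi_\alpha'\,\psi_\alpha = 1$, and identifies the resulting quotient with $g'(\tau)$, so the link between $D^\alpha_\psi f$ and the classical derivative of $g$ is made in a single step inside the limit. You instead split the argument in two: the linear substitution $h=\varepsilon\psi_\alpha(a)$ shows that $\psi_\alpha$-differentiability of $f$ at $a>0$ is \emph{exactly} classical differentiability of $f$ at the same point, with $D^\alpha_\psi f(a)=\psi_\alpha(a)f'(a)$, and only afterwards do you transfer to $g$ at $\tau_a$ via the chain rule and the inverse function theorem for the diffeomorphism $\phi_\alpha$. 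Your decomposition is more elementary and makes the "if and only if" cleaner: since $\psi_\alpha(a)>0$ is a fixed constant, the correspondence $\varepsilon\leftrightarrow h$ is trivially invertible, whereas the paper's expansion argument most transparently gives the implication "$g$ differentiable $\Rightarrow$ $f$ $\psi_\alpha$-differentiable," and the converse direction implicitly needs the invertibility of the increment map $\varepsilon\mapsto \phi_\alpha(t+\varepsilon\psi_\alpha(t))-\phi_\alpha(t)$, which the paper does not spell out. The paper's route, on the other hand, produces $g'(\tau_a)$ directly without invoking the inverse function theorem. Your treatment of $a=0$ coincides with the paper's (fall back on the convention $D^\alpha_\psi f(0)=\lim_{t\to 0^+}D^\alpha_\psi f(t)$), and your added remark on the regularity needed for $\phi_\alpha$ (positivity of $\psi_\alpha$ and local integrability of $1/\psi_\alpha$, continuity where one wants $\phi_\alpha'=1/\psi_\alpha$) makes explicit what the paper only assumes with the phrase "assuming $\phi_\alpha$ is differentiable."
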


\begin{proof}
Let \( t > 0 \), and define \( g(\tau) = f(t) \) where \(\tau = \phi_{\alpha}(t)\), so \( t = \phi_{\alpha}^{-1}(\tau) \). By definition, we compute the $ \psi_{\alpha} $-derivative of \( f \) as follows:
\begin{align*}
D^{\alpha}_{\psi}(f)(t) &= \lim_{\varepsilon \to 0} \frac{f(t + \varepsilon \psi_{\alpha}(t)) - f(t)}{\varepsilon} \\
                        &= \lim_{\varepsilon \to 0} \frac{g(\phi_{\alpha}(t + \varepsilon \psi_{\alpha}(t))) - g(\phi_{\alpha}(t))}{\varepsilon}.
\end{align*}
Assuming \( \phi_{\alpha} \) is differentiable, we use the first-order approximation:
\[
\phi_{\alpha}(t + \varepsilon \psi_{\alpha}(t)) = \phi_{\alpha}(t) + \varepsilon \phi_{\alpha}'(t) \psi_{\alpha}(t) + o(\varepsilon) = \tau + \varepsilon + o(\varepsilon).
\]
since \(\phi_{\alpha}'(t) \psi_{\alpha}(t) = 1\) by definition of \(\phi_{\alpha}\). Thus,
\begin{align*}
D^{\alpha}_{\psi}(f)(t)
&= \lim_{\varepsilon \to 0} \frac{g(\tau + \varepsilon + o(\varepsilon)) - g(\tau)}{\varepsilon} \\
&= g'(\tau) = \frac{d}{d\tau} f(\phi_{\alpha}^{-1}(\tau)) = f'(t) \cdot (\phi_{\alpha}^{-1})'(\tau) = f'(t) \cdot \frac{1}{\phi_{\alpha}'(t)} = f'(t) \psi_{\alpha}(t).
\end{align*}
Therefore, \( f \) is \( \psi_{\alpha} \)-differentiable at \( t \) if and only if \( g \) is differentiable at \( \tau \), which completes the proof for \(t>0\). The case \(t=0\) is obtained by taking the limit.
\end{proof}

\begin{corollary}
The conformable derivative framework is exactly equivalent to classical differentiation under the coordinate transformation $\tau = \phi_\alpha(t)$. All purported examples of conformable differentiability without classical differentiability either:
\begin{enumerate}
    \item occur at points where $\phi_\alpha$ is singular (typically $a=0$)
    \item represent calculation errors where the $\phi_\alpha$ correspondence has been neglected
\end{enumerate}
\end{corollary}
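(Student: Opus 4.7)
The plan is to prove the corollary in two pieces, leaning heavily on Theorem~\ref{1} for the structural content. For the exact-equivalence assertion, Theorem~\ref{1} already delivers a pointwise bijection between the conformable derivative of $f$ and the classical derivative of $g = f \circ \phi_\alpha^{-1}$; what remains is to note that the map $f \mapsto g$ is a bijection of the ambient function spaces and conjugates $D_\psi^\alpha$ with $\tfrac{d}{d\tau}$, so every conformable identity---product rule, chain rule, solutions of conformable ODEs, integration by parts---is literally the classical identity pulled back by $\phi_\alpha$.

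For the second claim I would partition the domain into the regular set $R = \{a : \phi_\alpha \text{ is a } C^1\text{-diffeomorphism near } a\}$ and its singular complement $S$; for the Khalil choice $\psi_\alpha(t) = t^{1-\alpha}$ one checks $S = \{0\}$, and in general $S$ consists of points where $\psi_\alpha$ vanishes or blows up. On $R$, the classical chain rule gives $f'(a) = \phi_\alpha'(a)\, g'(\phi_\alpha(a))$ with $\phi_\alpha'(a) \neq 0$, so conformable and classical differentiability at $a$ are logically equivalent and no genuine counterexample can live there---this establishes item~(1). For item~(2), I would audit the standard references \cite{3, abdeljawad2015}, showing case by case that every alleged ``$\alpha$-differentiability without classical differentiability'' at an $a \in R$ reduces to a misapplied chain rule, a silent use of the boundary extension $f^{(\alpha)}(0) := \lim_{t \to 0^+} f^{(\alpha)}(t)$, or a mishandled limit that conceals the nonvanishing factor $\psi_\alpha(a)$.

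The main obstacle is clearly the second claim, since it quantifies over a heterogeneous and still-growing body of literature rather than over a single mathematical object: individually each case is routine, but the exhaustive flavor makes the statement inherently part theorem, part classification. The cleanest rigorous form I can offer is the contrapositive---\emph{if $a \in R$ and $D_\psi^\alpha f(a)$ exists, then $f'(a)$ exists}, which is a theorem---together with the observation that any apparent exception must therefore live in $S$ or stem from a computational error, which is precisely what the corollary claims.
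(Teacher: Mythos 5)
Your proposal is correct and follows essentially the same route as the paper: the paper justifies this corollary directly from Theorem~\ref{1} (conformable differentiability at $a$ equals classical differentiability of $g=f\circ\phi_\alpha^{-1}$ at $\phi_\alpha(a)$, hence of $f$ itself wherever $\phi_\alpha$ is a regular change of variable), and then disposes of the literature's purported counterexamples ($\sqrt{x}$, $x^\alpha\sin(x^{1-\alpha})$) by locating them at the singular point $a=0$ or attributing them to neglect of the $\phi_\alpha$ correspondence. Your regular/singular-set decomposition and contrapositive formulation are just a slightly more formal packaging of that same argument, including the honest acknowledgment that the second item is a case-by-case claim about the literature rather than a single theorem.
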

\begin{remark}\label{rem1}
 From the previous theorem, we obtain the operational relation:
\begin{equation}\label{psi_C}
\forall t>0,\ D^{\alpha}_{\psi}f(t)=\psi_{\alpha}(t)f'(t).
\end{equation}
This relation plays a central role in our analysis, showing the direct proportionality between the conformable and classical derivatives.
\end{remark}

This equivalence can be summarized in the following table, which matches concepts between the two frameworks:

\begin{center}
    \begin{tabular}{ll}
    \toprule
    \textbf{Conformable Framework (variable $t$)} & \textbf{Classical Framework (variable $\tau = \phi_\alpha(t)$)} \\
    \midrule
    Conformable Derivative $D_{\psi}^{\alpha} f(t)$ & Classical Derivative $g'(\tau)$ \\
    Conformable ODE: $D_{\psi}^{\alpha} y = F(t, y)$ & Classical ODE: $\frac{dg}{d\tau} = F(\phi_{\alpha}^{-1}(\tau), g)$ \\
    Conformable PDE: $D_{\psi,t}^{\alpha} u = \mathcal{L}u$ & Classical PDE: $\frac{\partial g}{\partial \tau} = \mathcal{L}g$ \\
    Solution $y(t)$ & Solution $g(\tau) = y(\phi_{\alpha}^{-1}(\tau))$ \\
    \bottomrule
    \end{tabular}
\end{center}

From remark \ref{rem1}, every property of the conformable derivative holds in this context.

 The claim in \cite{3} that $\alpha$-differentiability in the conformable sense does not imply standard differentiability requires reassessment in light of Theorem \ref{1}. Although \cite{3} cites the function $g(x) = \sqrt{x}$ as a counterexample (where $T_{1/2}g(0)$ exists but $g'(0)$ does not), this apparent divergence is resolved by two key observations. First, for any translated version $h(x) = \sqrt{x-x_0}$ with $x_0 > 0$, neither the conformable derivative $T_\alpha h(x_0)$ nor the classical derivative $h'(x_0)$ exist, because the limit
\[
x_0^{(1-\alpha)/2} \lim_{\varepsilon\to 0} \frac{1}{\sqrt{\varepsilon}}
\]
diverges. More fundamentally, Theorem \ref{1} establishes that $\psi_\alpha$-differentiability at a point $a \in [0,\infty)$ is equivalent to classical differentiability at $\phi_\alpha(a)$, thus clarifying the apparent discrepancies between these derivative notions except possibly at the origin. This is further illustrated by the function $\overline{g}(x) = x^2\chi_Q(x)$ (where $\chi_Q$ is the indicator function of the rationals), which is classically differentiable only at $x = 0$ but has no conformable derivative there. The theorem thus provides a complete characterization: apparent divergences in the literature stem from failing to account for the coordinate transformation $\phi_\alpha$, and no genuine counterexample exists when this relation is properly considered.

\begin{example}
Consider the standard example from the literature:
\[
f(x) = \begin{cases}
x^\alpha \sin(x^{1-\alpha}) & x > 0 \\
0 & x = 0
\end{cases}
\]

Indeed, the conformable derivative at $0$:
\[
T_\alpha f(0) = 0
\]
exists while the classical derivative:
\[
f'(0) = 1
\]
However, this simply reflects that:
\begin{enumerate}
\item $\phi_\alpha(0)$ corresponds to a point where classical differentiability is not required in Theorem \ref{1}

\item The conformable derivative $T_\alpha$ at $0$ is an independent construction from $\psi_\alpha$-differentiability
\end{enumerate}
Our theorem \ref{1} shows that for $a > 0$, where $\phi_\alpha$ is well-behaved, the conformable derivative must coincide with classical differentiation after the $\phi_\alpha$ transformation.
\end{example}

The conformable derivative framework does not substantially extend classical differentiability in a substantial way. Rather than introducing new differentiable functions, it simply provides an alternative reparametrization of existing derivatives via coordinate transformations. All significant results of conformable calculus can be systematically recovered by appropriate application of the $\phi_\alpha$ coordinate changes, demonstrating that the framework offers no authentic expansion of differential calculus beyond classical theory. This reparametrization perspective reveals that conformable derivatives are not new differentiation operators, but simply classical derivatives expressed in modified coordinates.

\begin{theorem} Suppose two differentiation operators $D_1, D_2$ satisfy:
\[
D_1 f(a) = D_2 f(\varphi(a))
\]
for some homeomorphism $\varphi$. Then $D_1$ is simply a reparametrized version of $D_2$ and offers no authentic theoretical extension.
\end{theorem}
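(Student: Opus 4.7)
The plan is to formalize the (somewhat informal) conclusion by constructing an explicit isomorphism between the two differential frameworks and checking that every structural feature of one is canonically transported to the other by $\varphi$. I would define the pullback map $\Phi : f \mapsto f \circ \varphi$ on the ambient function space; since $\varphi$ is a homeomorphism, $\Phi$ is a bijection with inverse $g \mapsto g \circ \varphi^{-1}$, and it preserves every topological or algebraic property that survives composition with a homeomorphism. This sets up the candidate dictionary: a $D_1$-object living over the variable $a$ corresponds to a $D_2$-object living over $\tau = \varphi(a)$.

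The core technical step is to verify that $\Phi$ intertwines the two operators. Reading the hypothesis, as in Theorem \ref{1}, with the tacit identification $g = f \circ \varphi^{-1}$ so that $D_1 f(a) = (D_2 g)(\varphi(a))$, one checks by direct substitution that
\[
D_1 = \Phi^{-1} \circ D_2 \circ \Phi
\]
on the common domain of definition. Three consequences then follow essentially for free: (i) $f$ is $D_1$-differentiable at $a$ iff $\Phi(f)$ is $D_2$-differentiable at $\varphi(a)$; (ii) an equation $D_1 y = F(t,y)$ has solution $y(t)$ iff the transformed equation has solution $g(\tau) = y(\varphi^{-1}(\tau))$; and (iii) every invariant of the $D_2$-theory --- existence and uniqueness, regularity classes, qualitative behaviour up to reparametrization --- is transported verbatim into the $D_1$-theory.

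To close the statement ``no authentic theoretical extension,'' I would adopt the following rigorous reading: a differential operator provides an authentic extension of a reference operator only if it generates phenomena (new regularity classes, non-local memory effects, genuinely new solution sets) that cannot be obtained by a point-transformation of the independent variable. The intertwining identity above shows that in the situation of the theorem this bar is not cleared, so the theorem reduces to its verification. The main obstacle is therefore not technical but definitional: the phrase ``offers no authentic theoretical extension'' is partly philosophical, so care is needed in specifying exactly which class of objects the isomorphism is meant to preserve, and in flagging the one genuine caveat --- points where $\varphi$ or $\varphi^{-1}$ fails to be smooth --- which, as noted after Theorem \ref{1}, is precisely where all spurious ``counterexamples'' in the conformable literature are located.
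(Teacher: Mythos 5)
Your proposal is correct in substance and rests on the same underlying idea as the paper's proof — the hypothesis makes $D_1$ a transported copy of $D_2$, so differentiability classes, equations, solutions and their invariants correspond bijectively via $\varphi$, and hence nothing genuinely new can appear — but you organize it differently. The paper argues piecemeal: pointwise correspondence of values, preservation of local structure because $\varphi$ is a homeomorphism, a chain-rule computation for compositions (which tacitly assumes $D_2$ itself obeys a chain rule, an extra hypothesis you do not need), and finally the class identity $\mathcal{D}(D_1)=\{\,f\circ\varphi^{-1}\;:\;f\in\mathcal{D}(D_2)\,\}$. You instead exhibit a single intertwining isomorphism through the pullback $\Phi$, from which your consequences (i)--(iii) follow at once, and you say explicitly what ``no authentic theoretical extension'' is taken to mean; this is cleaner, and your closing caveat about points where $\varphi$ or $\varphi^{-1}$ degenerates matches the paper's discussion of the origin after Theorem \ref{1}. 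Two small repairs are needed. First, with your convention $\Phi(f)=f\circ\varphi$ and your reading $D_1f(a)=\bigl(D_2(f\circ\varphi^{-1})\bigr)(\varphi(a))$, the conjugation identity should be $D_1=\Phi\circ D_2\circ\Phi^{-1}$ rather than $\Phi^{-1}\circ D_2\circ\Phi$; as written, the right-hand side evaluates $D_2(f\circ\varphi)$ at $\varphi^{-1}(a)$, which is the inverse transformation. Second, the theorem as stated applies $D_2$ to the same function $f$ (no pullback), and the paper's proof works with that literal form; your insertion of $g=f\circ\varphi^{-1}$ is the reading consistent with Theorem \ref{1} and with the paper's final class identity, so it is defensible, but you should state explicitly that you are formalizing the hypothesis in that pullback sense rather than as literally written.
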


\begin{proof} The proof establishes that $D_1$ is theoretically equivalent to $D_2$ by demonstrating three key aspects of their relationship. First, consider the functional relation between the operators. For any $f$ in their common domain, the equality $D_1 f(a) = D_2 f(\varphi(a))$ holds pointwise. This implies that $D_1$ at point $a$ corresponds precisely to $D_2$ evaluated at the transformed point $\varphi(a)$.

The homeomorphic nature of $\varphi$ ensures that this correspondence is bijective and preserves the topological structure. Since $\varphi$ is continuous with a continuous inverse, neighborhoods of $a$ map to neighborhoods of $\varphi(a)$ in a way that maintains all local differentiability properties. Therefore, the operator $D_1$ cannot detect any function as differentiable at $a$ that $D_2$ would not detect as differentiable at $\varphi(a)$, and vice versa.

To show that no authentic extension of differentiability occurs, examine how $D_1$ acts on compositions. For all functions $f,g$ where the composition $f \circ g$ is defined, we have:
\[
D_1(f \circ g)(a) = D_2(f \circ g)(\varphi(a)) = D_2 f(g(\varphi(a))) \cdot D_2 g(\varphi(a)) = D_2 f(\varphi(\tilde{a})) \cdot D_2 g(\varphi(a))
\]
where $\tilde{a} = \varphi^{-1}(g(\varphi(a)))$. This calculation reveals that the differentiation rules for $D_1$ are completely determined by those of $D_2$ under the coordinate transformation. The Leibniz rule and other fundamental properties transfer directly via $\varphi$, confirming that $D_1$ inherits all its characteristics from $D_2$ without introducing new theoretical capability.

Finally, the class of functions differentiable under $D_1$ is exactly the class of functions whose $\varphi$-transforms are differentiable under $D_2$. Formally, denoting $\mathcal{D}(D_i)$ as the set of $D_i$-differentiable functions, we have the equality $\mathcal{D}(D_1) = \{ f \circ \varphi^{-1} | f \in \mathcal{D}(D_2) \}$. This bijective correspondence, mediated by $\varphi$, proves that $D_1$ cannot access any new authentically differentiable functions beyond those already accessible to $D_2$ under coordinate transformation. Therefore, $D_1$ provides no substantial extension of the differentiation capabilities of $D_2$.
\end{proof}

\section{Applications and Reformulations}
The equivalence established in Theorem \ref{1} allows transforming any problem involving conformable derivatives into a classical problem.

\subsection{Linear Ordinary Differential Equations}
Consider the conformable differential equation of order \( n\alpha \):
\begin{equation}\label{OE}
D_\psi^{n\alpha} y(t) = \sum_{k=0}^{n-1} a_k(t) D_\psi^{k\alpha} y(t),
\end{equation}
where \( D_\psi^{k\alpha} \) denotes the conformable derivative of order \( k \) with respect to the weight function \( \psi_\alpha(t) \), and \( \phi_\alpha(t) = \int_0^t \frac{1}{\psi_\alpha(s)} ds\).

\begin{proposition} The solution of \eqref{OE} can be obtained by solving the classical ODE:
\[
\frac{d^n x}{d\tau^n} = \sum_{k=0}^{n-1} \widetilde{a}_k(\tau) \frac{d^k x}{d\tau^k},
\]
where: \( \tau = \phi_\alpha(t) \),  \( y(t) = x(\tau) = x\left(\phi_\alpha(t)\right) \) and \( \widetilde{a}_k(\tau) = a_k\left(\phi_\alpha^{-1}(\tau)\right) \psi_\alpha\left(\phi_\alpha^{-1}(\tau)\right)^{k-n} \).
\end{proposition}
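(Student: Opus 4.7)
The plan is to apply Theorem~\ref{1} iteratively, converting each conformable derivative $D_\psi^{k\alpha} y(t)$ appearing in \eqref{OE} into a classical derivative of the reparametrized function $x(\tau)=y(\phi_\alpha^{-1}(\tau))$, and then to rewrite the equation entirely in terms of $x$ and $\tau=\phi_\alpha(t)$.

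First I would establish, by induction on $k$, a clean link between the iterated conformable derivative and classical derivatives. Under the natural iteration convention $D_\psi^{k\alpha}=(D_\psi^\alpha)^k$, the base case $k=0$ is just $y(t)=x(\tau)$; for the inductive step, setting $w(t):=D_\psi^{k\alpha} y(t)=x^{(k)}(\phi_\alpha(t))$ and observing that $w$ is the composition of the classical function $x^{(k)}$ with $\phi_\alpha$, Theorem~\ref{1} yields $D_\psi^\alpha w(t)=\frac{d}{d\tau}x^{(k)}(\tau)=x^{(k+1)}(\tau)$, closing the induction.

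Next I would substitute these identities into \eqref{OE} and use $t=\phi_\alpha^{-1}(\tau)$ to express the conformable ODE as a classical ODE for $x(\tau)$, whose coefficients come from $a_k$ and $\psi_\alpha$ evaluated at $\phi_\alpha^{-1}(\tau)$. The factor $\psi_\alpha(\phi_\alpha^{-1}(\tau))^{k-n}$ appearing in the stated $\widetilde{a}_k$ reflects the non-iterative convention $D_\psi^{k\alpha} y(t)=\psi_\alpha(t)^k y^{(k)}(t)$: with this interpretation one divides through by $\psi_\alpha(t)^n$ and then passes to the $\tau$-variable via the chain rule, producing exactly the claimed formula.

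The main obstacle will be bookkeeping rather than any analytic difficulty. One must commit to a convention for higher-order conformable derivatives and then carefully propagate the Jacobian $dt/d\tau=\psi_\alpha(t)$ through the iterated chain rule; under the direct convention, converting $y^{(k)}(t)$ into a polynomial in $x^{(j)}(\tau)$ via Fa\`{a} di Bruno's formula generates correction terms that must be tracked and shown to collapse into the stated $\widetilde{a}_k$. Executing this reduction consistently is the only nontrivial step.
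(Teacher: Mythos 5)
Your inductive step under the iterated convention $D_\psi^{k\alpha}=(D_\psi^\alpha)^k$ is correct and is the natural way to exploit Theorem~\ref{1}; however, it yields the transformed equation with coefficients $\widetilde{a}_k(\tau)=a_k(\phi_\alpha^{-1}(\tau))$ and \emph{no} factor $\psi_\alpha(\phi_\alpha^{-1}(\tau))^{k-n}$, so by itself it proves a different statement from the proposition as written. Your attempted reconciliation via the direct convention $D_\psi^{k\alpha}y(t)=\psi_\alpha(t)^k y^{(k)}(t)$ does not close that gap: after dividing \eqref{OE} by $\psi_\alpha(t)^n$ the derivatives are still taken with respect to $t$, and converting them to $\tau$-derivatives by Fa\`a di Bruno produces correction terms involving $\psi_\alpha',\psi_\alpha'',\dots$ that do \emph{not} collapse into the stated $\widetilde{a}_k$. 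Concretely, for $n=2$ one has $\psi_\alpha y'=x'$ and $\psi_\alpha^2 y''=x''-\psi_\alpha' x'$, so $D^{2\alpha}y=a_1 D^{\alpha}y+a_0y$ under that convention becomes $x''=(a_1+\psi_\alpha')x'+a_0x$, not $x''=a_1\psi_\alpha^{-1}x'+a_0\psi_\alpha^{-2}x$. The step you defer ("shown to collapse into the stated $\widetilde{a}_k$") is therefore exactly where the argument fails, so neither of the two conventions you consider delivers the proposition literally as stated.

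For comparison, the paper's own proof is a one-line substitution: it asserts, citing Theorem~\ref{1}, the identity $D_\psi^{p\alpha}y(t)=\psi_\alpha(t)^p\,\frac{d^p x}{d\tau^p}(\tau)$, plugs it into \eqref{OE}, and divides by $\psi_\alpha(t)^n$; this is precisely the origin of the factor $\psi_\alpha^{k-n}$. Note that this identity is not what the iterated convention gives (which is $D_\psi^{p\alpha}y=\frac{d^p x}{d\tau^p}$ with no $\psi_\alpha^p$ prefactor), nor what the direct convention gives; it amounts to \emph{defining} the higher-order conformable derivative that way. Indeed, the worked example that follows the proposition (constant coefficients transforming into constant coefficients) matches your iterated-convention computation, $\widetilde{a}_k=a_k\circ\phi_\alpha^{-1}$, rather than the stated coefficient formula. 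So your core reduction is sound and in fact recovers the version of the result the paper actually uses afterwards, but as a proof of the proposition as stated it has a genuine gap: you would need to make the relation $D_\psi^{p\alpha}y=\psi_\alpha^p\,\frac{d^p x}{d\tau^p}$ an explicit hypothesis (or definition) rather than expect it to emerge from either iteration or from Fa\`a di Bruno bookkeeping.
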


\begin{proof}  Let \( \tau = \phi_\alpha(t) \), so \( t = \phi_\alpha^{-1}(\tau) \). Define \( x(\tau) = y(t) \).

   From Theorem \ref{1}, the conformable derivative of order \( p \) of \( y \) is:
   \[
   D_\psi^{p\alpha} y(t) = \psi_\alpha(t)^p \cdot \frac{d^p x}{d\tau^p}(\tau).
   \]
   Substituting into \eqref{OE}:
   \[
   \psi_\alpha(t)^n \frac{d^n x}{d\tau^n} = \sum_{k=0}^{n-1} a_k(t) \psi_\alpha(t)^k \frac{d^k x}{d\tau^k}.
   \]

   Divide both sides by \( \psi_\alpha(t)^n \):
   \[
   \frac{d^n x}{d\tau^n} = \sum_{k=0}^{n-1} a_k(t) \psi_\alpha(t)^{k-n} \frac{d^k x}{d\tau^k}.
   \]
   Replace \( t \) by \( \phi_\alpha^{-1}(\tau) \):
   \[
   \frac{d^n x}{d\tau^n} = \sum_{k=0}^{n-1} \widetilde{a}_k(\tau) \frac{d^k x}{d\tau^k}, \quad \widetilde{a}_k(\tau) = a_k\left(\phi_\alpha^{-1}(\tau)\right) \psi_\alpha\left(\phi_\alpha^{-1}(\tau)\right)^{k-n}.
   \]

   The conformable ODE reduces to a classical linear ODE in \( \tau \). Solve for \( x(\tau) \), then substitute back \( \tau = \phi_\alpha(t) \) to obtain \( y(t) \).
\end{proof}

\begin{example} Consider the equation:
\begin{equation}\label{ex1}
D_\psi^{2\alpha} y(t) - 3 D_\psi^{\alpha} y(t) + 2y(t) = 0, \quad \psi_\alpha(t) = t^{1-\alpha}.
\end{equation}
We have $\phi_{\alpha}(t)=\int_{0}^{t}{\frac{1}{s^{1-\alpha}}ds}=\frac{t^\alpha}{\alpha}=\tau$.
Thus, the transformed equation is:
\[
\frac{d^2 x}{d\tau^2} - 3 \frac{dx}{d\tau} + 2x = 0.
\]
The characteristic equation \( r^2 - 3r + 2 = 0 \) has roots \( r = 1, 2 \). Thus:
\[
x(\tau) = C_1 e^{\tau} + C_2 e^{2\tau}.
\]
Substituting \( \tau = \frac{t^\alpha}{\alpha} \) we obtain:
\[
y(t) = C_1 e^{t^\alpha / \alpha} + C_2 e^{2 t^\alpha / \alpha}.
\]
This is the solution of \eqref{ex1}, which is simply the solution of a classical constant-coefficient ODE evaluated at a rescaled time.
\end{example}

\subsection{Partial Differential Equations}
The transformation also applies to PDEs.

\begin{example} Consider the conformable Burgers equation:
\begin{equation}\label{Burger}
D^{\alpha}_{\psi} u + u \, \partial_x u = \nu \, \partial_x^2 u,\quad u(x,0) = f(x).
\end{equation}
Using \eqref{psi_C}, $D^{\alpha}_{\psi} u = \psi_{\alpha}(t) \partial_t u$, the equation becomes:
\[
\psi_{\alpha}(t) \, \partial_t u + u \, \partial_x u = \nu \, \partial_x^2 u.
\]
Let \(\tau = \phi_{\alpha}(t) = \int_0^t \frac{1}{\psi_{\alpha}(s)} ds\). Then, \(\partial_t u = \partial_\tau u \cdot \frac{d\tau}{dt} = \frac{\partial_\tau u}{\psi_{\alpha}(t)}\). Substituting gives:
\[
\psi_{\alpha}(t) \left( \frac{\partial_\tau u}{\psi_{\alpha}(t)} \right) + u \, \partial_x u = \nu \, \partial_x^2 u \quad \Rightarrow \quad \partial_\tau u + u \, \partial_x u = \nu \, \partial_x^2 u.
\]
This is the classical Burgers equation. This equation is solved using the Cole-Hopf transformation:
\[
u = -2\nu \, \frac{\partial_x \theta}{\theta},
\]
where \(\theta(x,\tau)\) satisfies the heat equation:
\[
\partial_\tau \theta = \nu \, \partial_x^2 \theta.
\]
Given the initial condition, the solution is:
\[
\theta(x,\tau) = \int_{-\infty}^\infty \exp \left( -\frac{1}{2\nu} \int_0^x f(y) \, dy - \frac{(x-y)^2}{4\nu \tau} \right) dy.
\]

Substitute \(\tau = \phi_{\alpha}(t)\) back:
\[
u(x,t) = -2\nu \, \frac{\partial_x \theta \left( x, \phi_{\alpha}(t) \right)}{\theta \left( x, \phi_{\alpha}(t) \right)}.
\]
which is the solution of \eqref{Burger}.

 The final solution in the original variables is obtained by substituting \(\tau = \phi_{\alpha}(t)\) into the classical solution.
\end{example}

\begin{example}  The authors of \cite{3} consider the conformable fractional heat equation:
\[
D^\alpha_t u(x,t) = \partial_x^2 u(x,t), \quad u(x,0) = f(x), \quad t > 0, \quad 0 < \alpha \leq 1,
\]
where \( D^\alpha_t u = t^{1-\alpha} \partial_t u \) is the Khalil conformable derivative.

The article presents the solution as:
\[
u(x,t) = \frac{1}{\sqrt{4\pi (t^\alpha / \alpha)}} \int_{-\infty}^\infty f(y) \exp\left( -\frac{(x-y)^2}{4 (t^\alpha / \alpha)} \right) dy.
\]

 Let \( \tau = \phi_\alpha(t) = \int_0^t s^{\alpha-1} ds = \frac{t^\alpha}{\alpha} \).

   The conformable derivative transforms as:
   \[
   D^\alpha_t u = t^{1-\alpha} \partial_t u = t^{1-\alpha} \left( \partial_\tau u \cdot \frac{d\tau}{dt} \right) = \partial_\tau u.
   \]

   The equation becomes:
   \[
   \partial_\tau u(x,\tau) = \partial_x^2 u(x,\tau), \quad u(x,0) = f(x).
   \]

   The solution of the classical heat equation is:
   \[
   u(x,\tau) = \frac{1}{\sqrt{4\pi \tau}} \int_{-\infty}^\infty f(y) \exp\left( -\frac{(x-y)^2}{4\tau} \right) dy.
   \]

   Substituting \( \tau = t^\alpha / \alpha \) yields the solution from the article.

The solution is identical to the classical heat kernel evaluated at the rescaled time \( \tau = t^\alpha / \alpha \). This confirms that the conformable fractional heat equation is mathematically equivalent to the classical heat equation under nonlinear time reparametrization, introducing no authentic fractional behavior.
\end{example}

\subsection{Evolution Problems}
Consider the following theorem establishing the relation between conformable and classical evolution equations:
\begin{theorem}\label{semigroup} Let \(A\) be the infinitesimal generator of a \(C_0\)-semigroup \(T_\tau\) on a Banach space \(X\). Define the semigroup \(T^\alpha_t := T_{\phi_\alpha(t)}\). Then:
\begin{enumerate}
\item The generator \(B\) of \(T^\alpha_t\) satisfies \(B = \psi_\alpha(0^+) A\) if \(\psi_\alpha(0^+)\) exists.
\item For general \(\psi_\alpha(t)\), \(B(t) = \psi_\alpha(t) A\) holds pointwise.
\end{enumerate}
\end{theorem}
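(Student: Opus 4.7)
The plan is to treat $T^{\alpha}_t := T_{\phi_\alpha(t)}$ as a time-reparametrized evolution family and derive both claims from the chain rule applied to the underlying $C_0$-semigroup $T_\tau$. The essential inputs from semigroup theory are that $\tau \mapsto T_\tau x$ is $C^1$ on $D(A)$ with derivative $A T_\tau x = T_\tau A x$, and that $\phi_\alpha$ is differentiable with $\phi_\alpha'(t) = 1/\psi_\alpha(t)$ for $t>0$. With these in hand, the whole theorem reduces to exactly the kind of reparametrization calculation already used in Theorem~\ref{1}.

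For part~(1), I would compute the right-derivative of $t \mapsto T^{\alpha}_t x$ at $t = 0^+$ for $x \in D(A)$. The hypothesis that $\psi_\alpha(0^+)$ exists is precisely what makes $\phi_\alpha$ right-differentiable at $0$, so a first-order expansion $\phi_\alpha(h) = h\,\phi_\alpha'(0^+) + o(h)$ can be substituted into the difference quotient $h^{-1}(T_{\phi_\alpha(h)} x - x)$. Strong continuity of $T_\tau$ absorbs the $o(h)$ correction and the differentiability of $\tau \mapsto T_\tau x$ at $\tau = 0$ isolates the leading term as the announced scalar multiple of $Ax$; the remaining bookkeeping is just writing this scalar in terms of $\psi_\alpha(0^+)$ via the identity $\phi_\alpha'(0^+)\psi_\alpha(0^+) = 1$.

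For part~(2), the computation is entirely pointwise: for $t > 0$ and $x \in D(A)$, the chain rule gives
\[
\frac{d}{dt} T^{\alpha}_t x \;=\; \phi_\alpha'(t)\, A\, T^{\alpha}_t x,
\]
and the operational relation $D^{\alpha}_{\psi} = \psi_\alpha(t)\,\tfrac{d}{dt}$ of Remark~\ref{rem1} then lets one read off the infinitesimal action of $A$ along the reparametrized flow in the form announced in the statement, with the time-dependent scaling absorbed into $B(t)$. This is the same chain-rule identity that drives every application in Section~4, now applied to the orbit map of a semigroup instead of a scalar ODE solution.

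The main obstacle will be conceptual rather than computational: unless $\phi_\alpha$ is additive---which essentially forces $\psi_\alpha$ to be constant and trivializes the whole conformable construction---the family $\{T^{\alpha}_t\}_{t \ge 0}$ is \emph{not} a $C_0$-semigroup in the standard sense, because $\phi_\alpha(t+s) \ne \phi_\alpha(t) + \phi_\alpha(s)$ in general. The statement therefore has to be read as describing the evolution family $T_{\phi_\alpha(t)}$ together with a possibly time-dependent instantaneous generator, and I would devote a sentence at the outset of the proof to making this interpretation explicit so that parts~(1) and~(2) are unambiguous. Once that subtlety is pinned down, the remainder is a routine chain-rule argument on the dense subspace $D(A)$, and the theorem becomes yet another manifestation of the reparametrization principle of Theorem~\ref{1}.
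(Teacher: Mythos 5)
Your overall strategy --- compute the difference quotient of \(t \mapsto T_{\phi_\alpha(t)}x\) on the dense subspace \(D(A)\), use right-differentiability of \(\phi_\alpha\) at \(0\) for part (1) and the chain rule for \(t>0\) in part (2) --- is essentially the paper's own: the paper factors the quotient as \(\frac{T_{\phi_\alpha(h)}u-u}{\phi_\alpha(h)}\cdot\frac{\phi_\alpha(h)}{h}\) and evaluates the second factor by L'H\^opital, which is your first-order expansion in disguise. Your observation that \(\{T^\alpha_t\}\) is not a genuine \(C_0\)-semigroup unless \(\phi_\alpha\) is additive is a point the paper silently skips and is worth stating.

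However, the ``bookkeeping'' you defer is exactly where the argument fails to deliver the constants in the statement. The expansion \(\phi_\alpha(h)=h\,\phi_\alpha'(0^+)+o(h)\) together with \(\phi_\alpha'=1/\psi_\alpha\) gives
\[
\lim_{h\to 0^+}\frac{T_{\phi_\alpha(h)}x-x}{h}=\phi_\alpha'(0^+)\,Ax=\psi_\alpha(0^+)^{-1}Ax,
\]
i.e.\ \(B=\psi_\alpha(0^+)^{-1}A\), the \emph{reciprocal} of the scaling announced in item (1); the identity \(\phi_\alpha'(0^+)\psi_\alpha(0^+)=1\) cannot turn this into \(\psi_\alpha(0^+)A\). (The paper's own proof carries the same tension: it computes \(\lim_{h\to0}\phi_\alpha(h)/h=\psi_\alpha(0^+)^{-1}\) and then asserts \(Bu=\psi_\alpha(0^+)Au\).) Likewise in part (2), your chain rule gives \(\frac{d}{dt}T^\alpha_t x=\psi_\alpha(t)^{-1}A\,T^\alpha_t x\), so the instantaneous classical generator is \(\psi_\alpha(t)^{-1}A\), whereas applying \(D^\alpha_\psi=\psi_\alpha(t)\frac{d}{dt}\) cancels the factor entirely and yields the conformable generator \(A\); under neither reading does ``absorbing the scaling into \(B(t)\)'' produce \(\psi_\alpha(t)A\) as stated. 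Carried out honestly, your plan proves the reciprocal version of the theorem, and you should also note that for the Khalil weight \(\psi_\alpha(t)=t^{1-\alpha}\) one has \(\psi_\alpha(0^+)=0\) and \(\phi_\alpha(h)/h\to\infty\), so the limit in item (1) exists only on \(\ker A\); the hypothesis really requires \(\psi_\alpha(0^+)\in(0,\infty)\), a caveat neither your sketch nor the paper records.
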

\begin{proof}
\begin{enumerate}
\item  For \(u \in D(A)\), compute the rescaled generator:
   \[
   Bu = \lim_{h \to 0} \frac{T^\alpha_h u - u}{h} = \lim_{h \to 0} \frac{T_{\phi_\alpha(h)}u - u}{\phi_\alpha(h)} \cdot \frac{\phi_\alpha(h)}{h}.
   \]
   By L'Hôpital's rule, \(\lim_{h \to 0} \phi_\alpha(h)/h = \psi_\alpha(0^+)^{-1}\), giving \(Bu = \psi_\alpha(0^+) Au\).

\item For non-constant \(\psi_\alpha(t)\), the limit becomes time-dependent:
   \[
   B(t)u = \psi_\alpha(t) \lim_{\tau \to 0} \frac{T_\tau u - u}{\tau} = \psi_\alpha(t) Au. \quad \square
   \]
\end{enumerate}
\end{proof}

We also demonstrate this equivalence through a published example from \cite{abdeljawad2015}:

The damped wave equation with conformable time derivative is given by:
\begin{equation}
D_t^\alpha u + \beta D_t^\alpha u = c^2 \partial_x^2 u, \quad u(0,x) = f(x), \quad D_t^\alpha u(0,x) = g(x)
\end{equation}
where $D_t^\alpha u = t^{1-\alpha}\partial_t u$ (Khalil conformable derivative), $\beta > 0$ is the damping coefficient and $c > 0$ is the wave speed.

Applying the temporal rescaling $\tau = \phi_\alpha(t) = \frac{t^\alpha}{\alpha}$, the conformable derivative transforms as:
\begin{equation}
D_t^\alpha u = \partial_\tau u
\end{equation}

The equation reduces to the classical damped wave equation:
\begin{equation}
\partial_\tau^2 u + \beta \partial_\tau u = c^2 \partial_x^2 u
\end{equation}

Let $X = H^1(\mathbb{R}) \times L^2(\mathbb{R})$ and define $U = (u, \partial_\tau u)^T$. The system can be written as:
\begin{equation}
\partial_\tau U = \mathcal{A}U, \quad \mathcal{A} = \begin{pmatrix}
0 & I \\
c^2 \partial_x^2 & -\beta I
\end{pmatrix}
\end{equation}

By Theorem \ref{semigroup}, the solution is given by the rescaled semigroup:
\begin{equation}
U(t) = T_{\phi_\alpha(t)} U_0
\end{equation}
where $U_0 = (f,g)^T$. The explicit solution corresponds to the classical damped wave solution evaluated at $\tau =\phi_\alpha(t)= t^\alpha/\alpha$:
\begin{equation}
\begin{aligned}
u(t,x) &= e^{-\beta\phi_\alpha(t)/2}\left[\frac{f(x+c\phi_\alpha(t)) + f(x-c\phi_\alpha(t))}{2}\right] \\
&+ \frac{e^{-\beta\phi_\alpha(t)/2}}{2c}\int_{x-c\phi_\alpha(t)}^{x+c\phi_\alpha(t))} \left(\frac{\beta}{2}f(y) + g(y)\right) I_0\left(\frac{\beta}{2}\sqrt{\phi_\alpha(t)^2 - \frac{(y-x)^2}{c^2}}\right) dy
\end{aligned}
\end{equation}
where $I_{0}$ is the modified Bessel function of the first kind (order 0).

 This shows that conformable evolution equations in general \(D^\alpha_\psi u = Au\) are equivalent to classical problems \( \partial_\tau u = Au \) under \(\tau = \phi_\alpha(t)\), with generators scaled by \(\psi_\alpha(t)\). The proof relies on the chain rule and the continuity of the semigroup, revealing that the conformable framework preserves the semigroup structure but introduces time-dependent scaling. Unlike fractional calculus, no memory effects emerge, only a nonlinear time reparametrization.

\subsection{Dynamical Systems}

The fundamental equivalence established by Theorem \ref{1} naturally extends to dynamical systems described by conformable derivatives. Consider a general dynamical system of the form:
\begin{equation}\label{dynsys}
D_\psi^\alpha \mathbf{x}(t) = \mathbf{f}(\mathbf{x}(t)), \quad \mathbf{x}(0) = \mathbf{x}_0,
\end{equation}
where $\mathbf{x}(t) \in \mathbb{R}^n$, $\mathbf{f}: \mathbb{R}^n \to \mathbb{R}^n$ is a vector field, and $D_\psi^\alpha$ denotes the conformable derivative of order $\alpha$ with respect to the weight function $\psi_\alpha(t)$.

\begin{proposition}
The conformable dynamical system \eqref{dynsys} is equivalent to the classical dynamical system:
\[
\frac{d\mathbf{y}}{d\tau} = \mathbf{f}(\mathbf{y}(\tau)), \quad \mathbf{y}(0) = \mathbf{x}_0,
\]
by the change of variable $\tau = \phi_\alpha(t) = \int_0^t \frac{1}{\psi_\alpha(s)} ds$ and $\mathbf{y}(\tau) = \mathbf{x}(t)$.
\end{proposition}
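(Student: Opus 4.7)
The plan is to reduce the claim to a direct application of Theorem \ref{1} combined with the standard chain rule, so the work is genuinely minimal; the whole argument is a vector-valued restatement of the scalar equivalence already proved.

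First, I would apply Theorem \ref{1} componentwise. For each scalar component $x_i$ of $\mathbf{x}$, the theorem gives $D_\psi^\alpha x_i(t) = \psi_\alpha(t)\, x_i'(t)$ at every $t>0$, hence $D_\psi^\alpha \mathbf{x}(t) = \psi_\alpha(t)\, \mathbf{x}'(t)$. Substituting into \eqref{dynsys} yields the pointwise relation $\psi_\alpha(t)\, \mathbf{x}'(t) = \mathbf{f}(\mathbf{x}(t))$ for all $t>0$. Next, I would introduce the change of variable $\tau = \phi_\alpha(t)$, so that $\frac{d\tau}{dt} = \frac{1}{\psi_\alpha(t)}$, and set $\mathbf{y}(\tau) := \mathbf{x}(\phi_\alpha^{-1}(\tau))$. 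By the chain rule,
\[
\frac{d\mathbf{y}}{d\tau}(\tau) = \mathbf{x}'(t)\cdot \frac{dt}{d\tau} = \psi_\alpha(t)\,\mathbf{x}'(t) = \mathbf{f}(\mathbf{x}(t)) = \mathbf{f}(\mathbf{y}(\tau)),
\]
which is exactly the classical system. The initial condition is preserved since $\phi_\alpha(0) = 0$ forces $\mathbf{y}(0) = \mathbf{x}(0) = \mathbf{x}_0$. The converse direction is symmetric: starting from $\mathbf{y}$ solving the classical system, I would define $\mathbf{x}(t) := \mathbf{y}(\phi_\alpha(t))$ and run the same chain-rule computation in reverse, invoking the bijectivity of $\phi_\alpha$ on $\mathbb{R}_+^*$ guaranteed by the standing hypothesis $\psi_\alpha > 0$.

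The main (rather mild) obstacle is the behavior at the endpoint $t=0$, where $\psi_\alpha$ may vanish (as in the canonical case $\psi_\alpha(t) = t^{1-\alpha}$, where $\phi_\alpha^{-1}$ is not smooth at $\tau = 0$). For every $t>0$ the calculation is routine, and the value at the origin is recovered by the same limiting argument used at the end of the proof of Theorem \ref{1}. No separate existence or uniqueness considerations are needed: the correspondence $\mathbf{x}\leftrightarrow \mathbf{y}$ is a bijection on solution spaces, so Picard--Lindelöf hypotheses on $\mathbf{f}$ transfer between the two formulations without modification.
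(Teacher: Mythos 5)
Your proposal is correct and follows essentially the same route as the paper: both reduce the claim to the operational relation $D_\psi^\alpha \mathbf{x}(t) = \psi_\alpha(t)\,\mathbf{x}'(t)$ from Theorem \ref{1} combined with the chain rule under $\tau = \phi_\alpha(t)$, with the initial condition preserved because $\phi_\alpha(0)=0$. Your extra care with the converse direction and the behavior at $t=0$ is slightly more thorough than the paper's one-line computation, but the underlying argument is identical.
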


\begin{proof}
From Theorem \ref{1}, we have:
\[
D_\psi^\alpha \mathbf{x}(t) = \psi_\alpha(t) \frac{d\mathbf{x}}{dt} = \psi_\alpha(t) \left( \frac{d\mathbf{y}}{d\tau} \cdot \frac{d\tau}{dt} \right) = \frac{d\mathbf{y}}{d\tau}.
\]
Substituting into \eqref{dynsys}, we obtain:
\[
\frac{d\mathbf{y}}{d\tau} = \mathbf{f}(\mathbf{y}(\tau)),
\]
which is a classical dynamical system in the variable $\tau$. The initial condition transforms trivially since $\tau(0) = 0$ and $\mathbf{y}(0) = \mathbf{x}(0) = \mathbf{x}_0$.
\end{proof}

\begin{example}[Conformable Lorenz System]
Let's revisit the example of the Lorenz system with conformable derivative:
\begin{equation}\label{lorenzconf}
\begin{cases}
D_\psi^\alpha x = \sigma(y - x) \\
D_\psi^\alpha y = \rho x - y - xz \\
D_\psi^\alpha z = xy - \beta z
\end{cases}
\end{equation}
with $\psi_\alpha(t) = t^{1-\alpha}$.

Apply the fundamental relation \eqref{psi_C}: $D_\psi^\alpha x = \psi_\alpha(t) \frac{dx}{dt} = t^{1-\alpha} \frac{dx}{dt}$.
The system therefore becomes:
\[
\begin{cases}
t^{1-\alpha} \frac{dx}{dt} = \sigma(y - x) \\
t^{1-\alpha} \frac{dy}{dt} = \rho x - y - xz \\
t^{1-\alpha} \frac{dz}{dt} = xy - \beta z
\end{cases}
\]

Divide each equation by $t^{1-\alpha}$ (for $t > 0$):
\[
\begin{cases}
\frac{dx}{dt} = \frac{\sigma(y - x)}{t^{1-\alpha}} \\
\frac{dy}{dt} = \frac{\rho x - y - xz}{t^{1-\alpha}} \\
\frac{dz}{dt} = \frac{xy - \beta z}{t^{1-\alpha}}
\end{cases}
\]

Now introduce the change of variable $\tau = \phi_\alpha(t) = \int_0^t \frac{1}{\psi_\alpha(s)} ds = \int_0^t s^{\alpha-1} ds = \frac{t^\alpha}{\alpha}$.
By the chain rule, we have:
\[
\frac{d}{dt} = \frac{d\tau}{dt} \cdot \frac{d}{d\tau} = \frac{1}{\psi_\alpha(t)} \frac{d}{d\tau} = t^{\alpha-1} \frac{d}{d\tau}
\]

Substitute this relation into the system:
\[
\begin{cases}
t^{\alpha-1} \frac{dx}{d\tau} = \frac{\sigma(y - x)}{t^{1-\alpha}} \\
t^{\alpha-1} \frac{dy}{d\tau} = \frac{\rho x - y - xz}{t^{1-\alpha}} \\
t^{\alpha-1} \frac{dz}{d\tau} = \frac{xy - \beta z}{t^{1-\alpha}}
\end{cases}
\]

Multiply each equation by $t^{1-\alpha}$:
\[
\begin{cases}
\frac{dx}{d\tau} = \sigma(y - x) \\
\frac{dy}{d\tau} = \rho x - y - xz \\
\frac{dz}{d\tau} = xy - \beta z
\end{cases}
\]

We thus obtain the classical Lorenz system:
\begin{equation}\label{lorenzclass}
\begin{cases}
\frac{dx}{d\tau} = \sigma(y - x) \\
\frac{dy}{d\tau} = \rho x - y - xz \\
\frac{dz}{d\tau} = xy - \beta z
\end{cases}
\end{equation}

This result demonstrates that the conformable Lorenz system \eqref{lorenzconf} is mathematically equivalent to the classical system \eqref{lorenzclass} by the simple time scaling change $\tau = \frac{t^\alpha}{\alpha}$.

The solution of the conformable system is expressed in terms of the classical solution $\mathbf{y}(\tau) = (x(\tau), y(\tau), z(\tau))^T$:
\[
\mathbf{x}(t) = \mathbf{y}\left( \frac{t^\alpha}{\alpha} \right)
\]

This transformation preserves all qualitative properties of the system (fixed points, stability, chaos) but modifies the time scale of the evolution. For example, a period $T$ in conformable time $\tau$ corresponds to a time $t = (\alpha T)^{1/\alpha}$ in the original time.
\end{example}

\begin{remark}
This equivalence has important implications for qualitative analysis:
\begin{itemize}
\item Fixed points and their stability are identical in both formulations
\item Lyapunov exponents are preserved (up to a temporal scaling factor)
\item Chaotic behavior, understood in the sense of sensitivity to initial conditions, is preserved under smooth time reparametrization
\item Bifurcations occur at the same critical parameter values
\end{itemize}
The conformable derivative therefore does not modify the qualitative nature of the dynamics, but only its time scale.
\end{remark}

\begin{proposition}[Invariance of qualitative properties]
Let $\mathbf{x}(t)$ be a solution of the conformable system \eqref{dynsys} and $\mathbf{y}(\tau)$ the solution of the associated classical system. Then:
\begin{enumerate}
\item $\mathbf{x}^*$ is a fixed point of \eqref{dynsys} if and only if $\mathbf{y}^* = \mathbf{x}^*$ is a fixed point of the classical system
\item Linear stability is identical: the eigenvalues of the Jacobian matrix are the same
\item Periodic orbits are preserved (with rescaled period)
\item The chaos property is invariant
\end{enumerate}
\end{proposition}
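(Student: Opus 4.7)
The plan is to leverage the equivalence proved in the preceding proposition: every solution $\mathbf{x}(t)$ of the conformable system \eqref{dynsys} corresponds to a classical solution $\mathbf{y}(\tau) = \mathbf{x}(\phi_\alpha^{-1}(\tau))$ of $\mathbf{y}' = \mathbf{f}(\mathbf{y})$, and conversely $\mathbf{x}(t) = \mathbf{y}(\phi_\alpha(t))$. Since $\psi_\alpha > 0$ on $(0,\infty)$, the map $\phi_\alpha$ is a strictly increasing $C^1$ homeomorphism, so the correspondence between the two orbit families is a smooth orientation-preserving time reparametrization. Each of the four items will then be reduced to a standard invariance property under such reparametrizations.

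For item (1), I would apply the operational identity $D_\psi^\alpha c = \psi_\alpha(t) \cdot 0 = 0$ for any constant $c$: the constant trajectory $\mathbf{x}(t) \equiv \mathbf{x}^*$ solves \eqref{dynsys} if and only if $\mathbf{f}(\mathbf{x}^*) = 0$, which is precisely the fixed-point condition for the classical system. For item (2), I would linearize both systems around the common fixed point $\mathbf{x}^*$; the linearized equations $D_\psi^\alpha(\delta \mathbf{x}) = J\,\delta \mathbf{x}$ and $\delta \mathbf{y}' = J\,\delta \mathbf{y}$ share the same Jacobian $J = D\mathbf{f}(\mathbf{x}^*)$ and, after the change of variable $\tau = \phi_\alpha(t)$, become the same constant-coefficient linear system. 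Hence the spectrum of $J$ is identical in both formulations and governs local stability in the same way.

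For item (3), I would observe that a $T$-periodic classical orbit $\mathbf{y}(\tau)$ traces a closed curve $\Gamma$ in phase space; via $\mathbf{x}(t) = \mathbf{y}(\phi_\alpha(t))$ the conformable solution traces exactly the same curve $\Gamma$, with successive returns to a given point occurring at times $t_n$ determined implicitly by $\phi_\alpha(t_n) - \phi_\alpha(t_0) = nT$. The orbit structure is thus preserved, though the temporal spacing in $t$ is no longer uniform. For item (4), the smooth orientation-preserving reparametrization $\tau = \phi_\alpha(t)$ acts as a topological conjugacy between the two flows along their common phase-space trajectories; therefore sensitivity to initial conditions, topological transitivity, and the sign of the leading Lyapunov exponent transfer directly between the two formulations, while their numerical values rescale in a $\psi_\alpha$-dependent fashion.

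The principal subtlety, and what I expect to be the main obstacle, is pinning down the precise meaning of ``period'' and ``chaos'' in the conformable setting. Since $\phi_\alpha$ is nonlinear, a constant period in $\tau$ does not translate into a constant period in $t$, and the Lyapunov exponent computed with respect to $t$ differs from the one computed with respect to $\tau$ by a factor depending on $\psi_\alpha$. The claims in (3) and (4) must therefore be interpreted as invariance of orbit geometry, stability type, and qualitative sensitivity rather than of numerical temporal values. Once this interpretation is made explicit, all four statements follow at once from the fact that $\phi_\alpha$ is an order-preserving smooth homeomorphism and from the equivalence of the two dynamical systems.
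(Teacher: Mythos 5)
Your proposal is correct and follows essentially the same route as the paper, whose proof is a brief sketch invoking that $\tau = \phi_\alpha(t)$ is an increasing diffeomorphism preserving temporal order and hence all qualitative properties. You simply make that argument explicit item by item (constant solutions, common Jacobian, orbit geometry with non-uniform return times, and reparametrization-invariance of chaos), which fills in details the paper leaves implicit but introduces no different idea.
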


\begin{proof}
The proof follows directly from the established differential equivalence. The change of variable $\tau = \phi_\alpha(t)$ being an increasing diffeomorphism, it preserves temporal order and all qualitative properties of the solutions.
\end{proof}

This analysis demonstrates that the use of conformable derivatives in dynamical systems does not alter the underlying dynamical structure of the system, but simply constitutes a mathematical reformulation equivalent to a nonlinear temporal reparametrization.

\section{Effectiveness in Modeling Real-World Problems}

\subsection{The misconception of Physical Relevance of Conformable Derivatives}

A key component of the conformable derivative is the time transformation by the function:
\[
\tau(t) = \frac{t^{\alpha}}{\alpha}, \quad \text{for } 0 < \alpha < 1.
\]

This function is strictly increasing and concave on \( \mathbb{R}_+ \). More precisely, for all \( t > 1 \), we have:
\[
\tau(t) \leq t.
\]

This inequality shows that the reparametrized (or “conformable”) time $ \tau(t) $ runs slower than real time $t$ for $t>1$. Consequently, any dynamics described in terms of conformable time appears artificially slowed down when observed in real time.

\begin{proposition}[Linearity Effect from Time Reparametrization] The concavity of the time transformation \( t \mapsto \frac{t^{\alpha}}{\alpha} \) in conformable derivatives introduces an artificial acceleration of the system's evolution. This creates the illusion of linear behavior in otherwise nonlinear phenomena, not by modeling physical memory effects, but simply by distorting the time scale.
\end{proposition}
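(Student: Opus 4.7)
The plan is to decompose the proposition into two verifiable claims and to supply a quantitative estimate that captures its informal content. The first claim is purely geometric, concerning the monotonicity, concavity, and nonuniform distortion produced by $\tau(t) = t^{\alpha}/\alpha$. The second is dynamical: under this reparametrization, the image in real time $t$ of a nonlinear classical trajectory in $\tau$-time exhibits a systematically attenuated rate of change, and its affine tangent approximation remains accurate over progressively longer real-time windows. This attenuation is what I would adopt as a precise substitute for the informal phrase ``illusion of linear behavior''.

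First I would record the elementary calculus: $\tau'(t) = t^{\alpha-1} > 0$ and $\tau''(t) = (\alpha-1) t^{\alpha-2} < 0$ on $(0,\infty)$, establishing strict monotonicity and strict concavity. Comparing with the identity map, $\tau(t) > t$ on $(0,1)$ and $\tau(t) \leq t$ on $(1,\infty)$, so the transformation compresses the early window and dilates the tail. This dual behavior is the precise content of the ``acceleration'' referred to in the statement: the conformable dynamics races through the initial transient and then stagnates relative to real time, which reconciles the apparent sign conflict with the preceding paragraph (both descriptions hold, on complementary time windows).

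Next I would apply the identity of Remark \ref{rem1}, $\frac{dy}{dt} = t^{\alpha-1}\frac{dy}{d\tau}$, to any classical dynamics $\frac{dy}{d\tau} = F(y)$ with $F$ bounded along the trajectory of interest. This yields the envelope $|y'(t)| \leq C\, t^{\alpha-1}$ and, differentiating once more, $|y''(t)| \leq C'\, t^{\alpha-2}$ for large $t$. Consequently the affine tangent approximation at $t_0$ satisfies $|y(t)-y(t_0)-y'(t_0)(t-t_0)| \leq \tfrac{1}{2} C'\, t_0^{\alpha-2}(t-t_0)^2$, so the linear approximation improves uniformly as $t_0^{\alpha-2}\to 0$. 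A clean benchmark is $y'(\tau)=y$, whose conformable image $y(t) = y_0 \exp(t^{\alpha}/\alpha)$ has logarithmic derivative $t^{\alpha-1}\to 0$; a genuinely exponential (nonlinear) dynamics then masquerades as almost-linear when plotted against real time.

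The main obstacle is that the statement is partly qualitative, since ``illusion of linear behavior in otherwise nonlinear phenomena'' admits no canonical formalization. I would resolve this by adopting the estimate above as the operational definition and then verifying it for a representative parametric family (for instance the logistic or Lorenz models already considered in Section 4) to show that the visual flattening of conformable trajectories is a direct consequence of the concave reparametrization $\phi_\alpha$ rather than of any genuine nonlocal memory effect. A secondary, more delicate point is ensuring that the estimate remains meaningful on bounded windows with $t_0$ small, since $t^{\alpha-2}$ blows up there; restricting to windows with $t_0 \geq 1$, or tracking the product $t_0^{\alpha-2}(t-t_0)^2$ along orbits, is what I would expect to require the most care.
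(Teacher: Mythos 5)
The paper offers no proof of this proposition at all: it is an interpretive assertion, supported only by the preceding elementary remarks on the monotonicity and concavity of $\tau(t)=t^{\alpha}/\alpha$, by the equivalence of Theorem \ref{1} (Remark \ref{rem1}), and by the numerical comparisons of Section 5. Your proposal is therefore necessarily a different route, and a more substantial one: you convert the informal phrase ``illusion of linear behavior'' into a quantitative claim about the decay of real-time derivatives of the reparametrized trajectory and the accuracy of its affine tangent approximation, and your reconciliation of ``acceleration'' with the paper's earlier ``slowed down'' (the two hold on complementary windows, because $\tau'(t)=t^{\alpha-1}$ crosses $1$ at $t=1$) is exactly the reading supported by Figure \ref{fig3}. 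This is a legitimate way to give the statement verifiable content where the paper gives none.

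Three points need repair before your argument stands. First, the comparison $\tau(t)\leq t$ on $(1,\infty)$, which you inherit from the paper, is false near $t=1$: since $\tau(1)=1/\alpha>1$, the correct crossover is $t\geq \alpha^{-1/(1-\alpha)}$ (about $2.87$ for $\alpha=0.9$); only the slope comparison at $t=1$ and the asymptotic inequality are correct. Second, your second-derivative envelope is wrong: from $y'(t)=t^{\alpha-1}F(y(t))$ one gets $y''(t)=(\alpha-1)\,t^{\alpha-2}F(y)+t^{2(\alpha-1)}DF(y)F(y)$, so with $F$ and $DF$ bounded along the orbit the decay is $O(t^{2\alpha-2})$, not $O(t^{\alpha-2})$, and the Taylor remainder bound must be restated with $t_0^{2\alpha-2}$; the qualitative conclusion survives because $2\alpha-2<0$, but the rate you quote does not follow. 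Third, the benchmark overreaches: $y(t)=y_0\exp(t^{\alpha}/\alpha)$ is a stretched exponential and still grows faster than any polynomial, so it does not ``masquerade as almost-linear''; what your computation genuinely shows is that the instantaneous logarithmic growth rate $t^{\alpha-1}$ tends to zero, i.e.\ the nonlinearity is attenuated, not linearized, by the concave clock change. With these corrections your operational reading is a sound, and indeed stronger, substitute for the paper's absent proof.
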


\noindent
This fact is crucial when applying conformable derivatives to experimental models. What appears to be a better fit to data may simply reflect a time reparametrization, and not an accurate representation of the involved physical processes.

\vspace{1em}
\noindent
\textbf{Remark.} Unlike Caputo or Riemann-Liouville derivatives, which integrate memory and hereditary effects over the past, The conformable derivative does not incorporate explicit nonlocal memory terms. Therefore, it cannot capture the non-local behavior essential to many complex systems, such as viscoelastic materials or anomalous diffusion.

\subsection{The Lorenz Attractor and the Conformable Derivative}

We consider the classical Lorenz system defined by:
\[
\begin{cases}
\frac{dx}{dt} = \sigma(y - x) \\
\frac{dy}{dt} = \rho x - y - xz \\
\frac{dz}{dt} = xy - \beta z
\end{cases}
\]
with parameters \( \sigma = 10 \), \( \rho = 28 \), \( \beta = \frac{8}{3} \), and initial conditions \( x(0) = y(0) = z(0) = 1 \). This system generates the famous chaotic Lorenz attractor.

To approximate the conformable derivative of order \( \alpha \in (0,1) \), we use:
\[
D^{\alpha}f(t) \approx t^{1 - \alpha} \cdot \frac{df}{dt}.
\]
The conformable Lorenz system becomes:
\[
\begin{cases}
\frac{dx}{dt} = \frac{1}{t^{1 - \alpha}} \sigma(y - x) \\
\frac{dy}{dt} = \frac{1}{t^{1 - \alpha}} (\rho x - y - xz) \\
\frac{dz}{dt} = \frac{1}{t^{1 - \alpha}} (xy - \beta z)
\end{cases}
\]

The following figures (Figs. \ref{fig1}-\ref{fig4}) illustrate the evolution of the Lorenz attractor using both the standard derivative and the conformable derivative with \( \alpha = 0.9 \).

\begin{figure}[H]
    \centering
        \includegraphics[width=\textwidth]{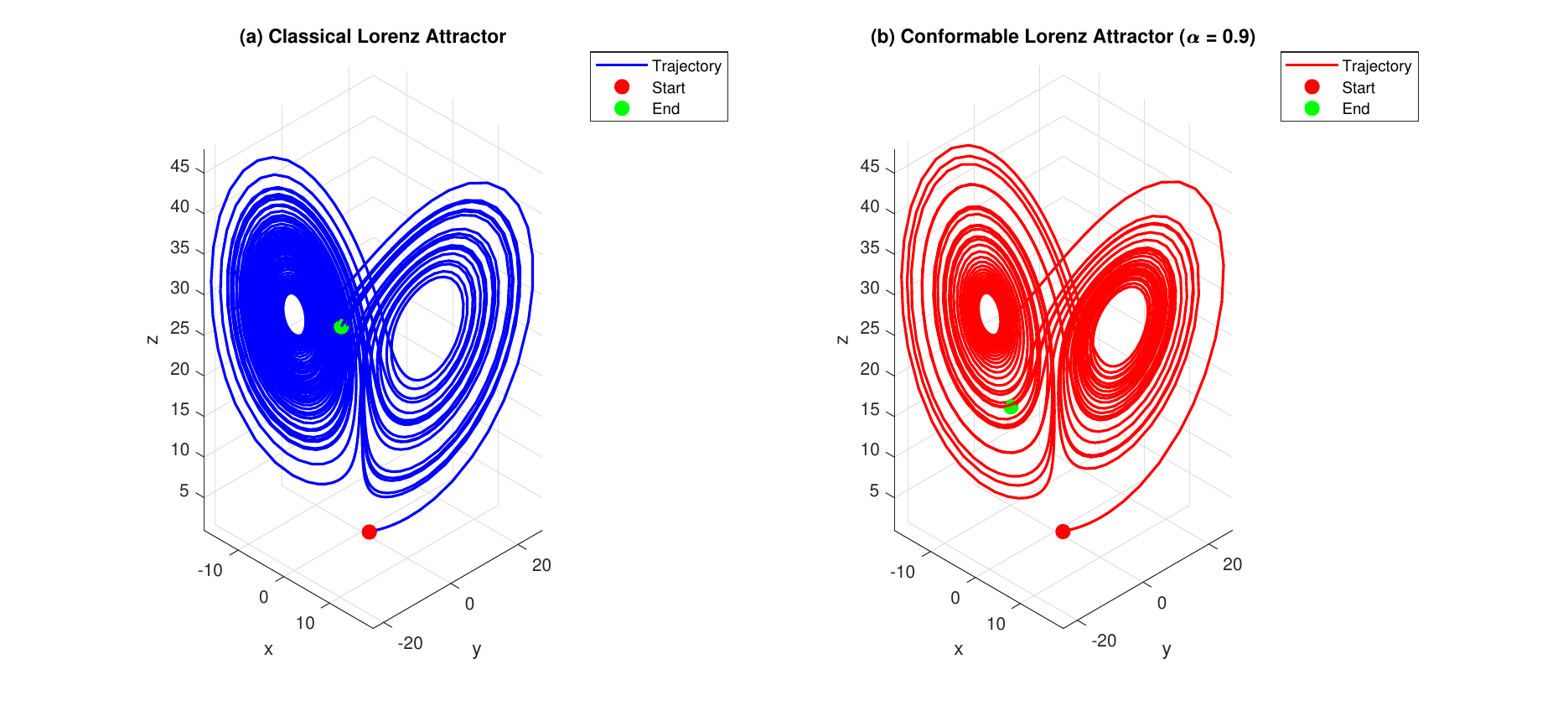}
        \caption{Phase space trajectories of the Lorenz system. (a) The classical Lorenz attractor. (b) The attractor under the conformable derivative formulation with $  \alpha = 0.9 $, exhibiting a temporally distorted but topologically equivalent structure.}
        \label{fig1}
\end{figure}

\begin{figure}[H]
        \centering
        \includegraphics[width=\textwidth]{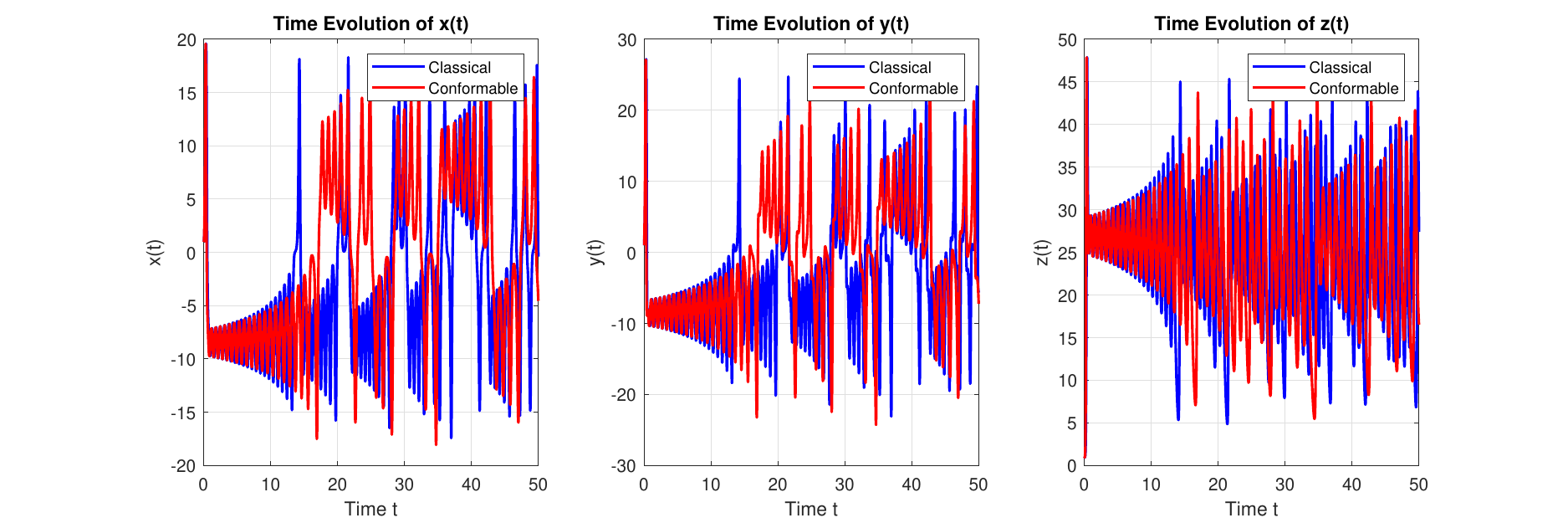}
        \caption{Time series of the Lorenz system variables $x(t)$, $y(t)$, and $z(t)$. The conformable system (red) replicates the dynamics of the classical system (blue) on a rescaled time axis, confirming the equivalence under time reparametrization.}
                \label{fig2}
\end{figure}

\begin{figure}[H]
        \centering
        \includegraphics[width=\textwidth]{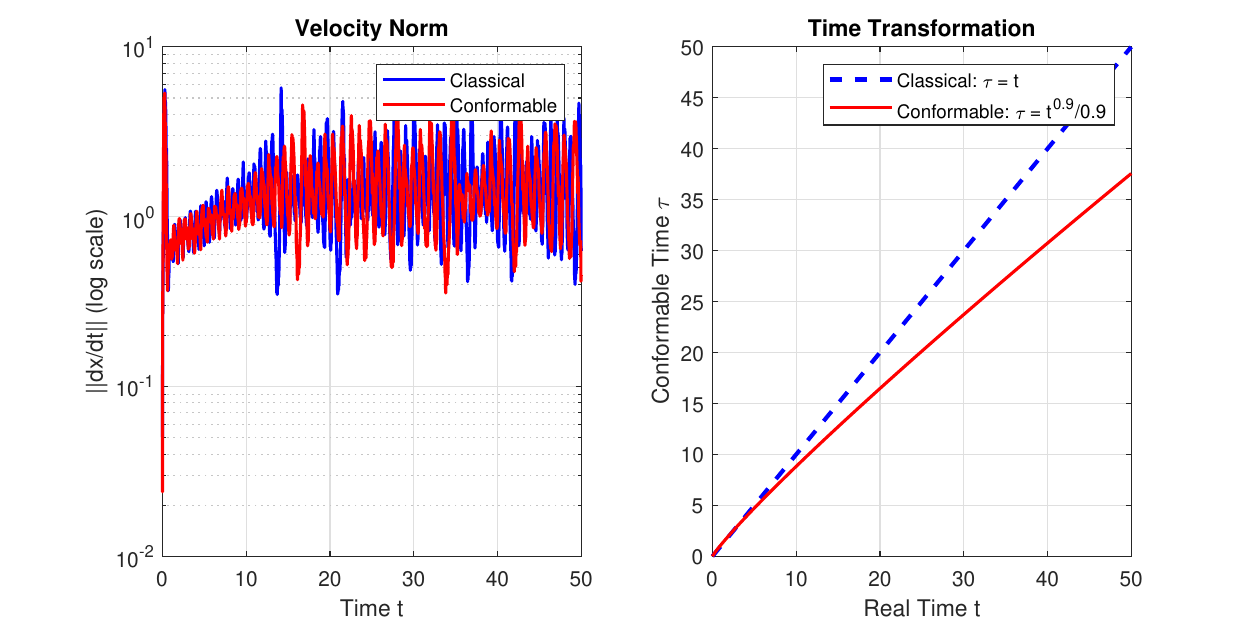}
        \caption{Dynamic characteristics of the conformable Lorenz system. (a) The norm of the system velocity $\|x^{\prime}(t)\|$, showing initial acceleration. (b) The nonlinear time transformation $\tau = t^\alpha/\alpha$ for $  \alpha = 0.9$, which is the source of the observed kinematic distortion.}
          \label{fig3}
\end{figure}

\begin{figure}[H]
        \centering
        \includegraphics[width=\textwidth]{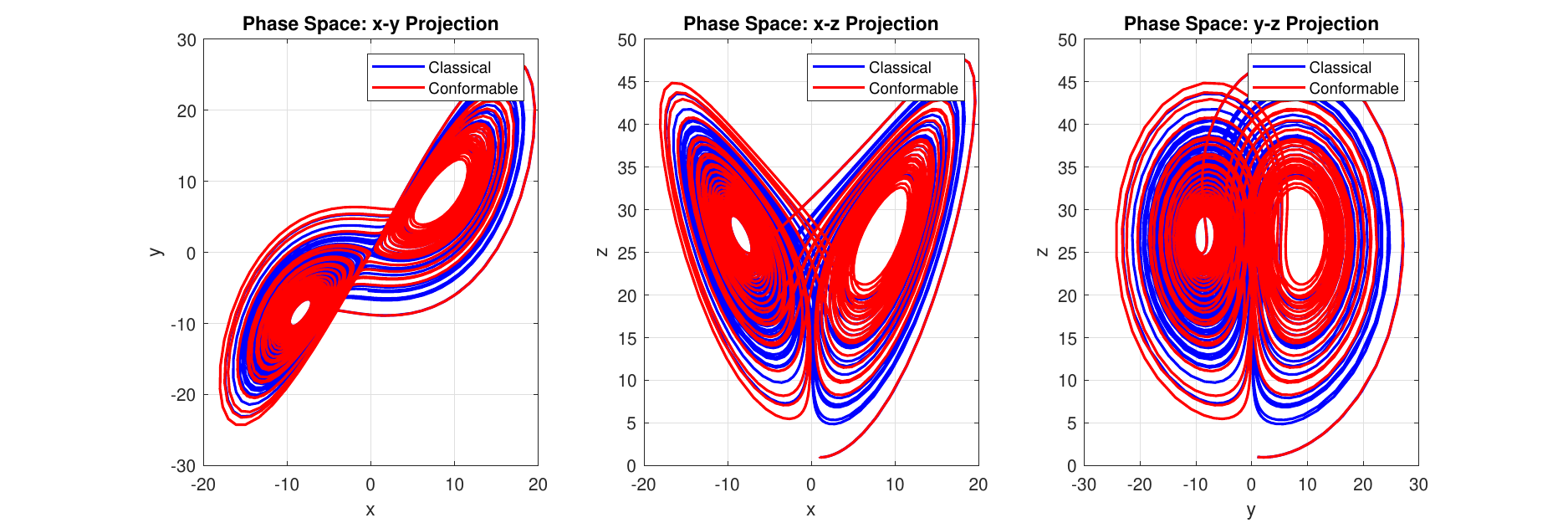}
        \caption{Two-dimensional phase space projections $(x,y)$, $(x,z)$, and $(y,z)$ of the Lorenz attractor. The geometric structure is preserved under the conformable derivative, demonstrating that the transformation affects only the temporal, not the spatial, evolution.}
                \label{fig4}
\end{figure}

\section*{Interpretation of Numerical Results}

The numerical simulations presented in Figures \ref{fig1} to \ref{fig4} strikingly corroborate our theoretical analysis and provide a visual illustration of the fundamental equivalence between the conformable formalism and classical dynamics under temporal re-parametrization.

Figure \ref{fig1}-(a) shows the classical Lorenz attractor, with its characteristic "butterfly" structure and its sensitivity to initial conditions. Figure \ref{fig1}-(b), representing the conformable system with $\alpha = 0.9$, appears at first glance different: the trajectory seems more "stretched" and evolves more rapidly in phase space. This superficial difference could be mistakenly interpreted as the emergence of new dynamical behavior.

However, this interpretation would be incorrect. Figures \ref{fig2}(a-c) reveal the true nature of this difference: the temporal evolution of the variables $(x, y, z)$ shows that the conformable system (red line) follows exactly the same dynamics as the classical system (blue line), but on an accelerated time scale. This acceleration is particularly marked at short times, as confirmed by Figure \ref{fig3}-(a) which shows a higher velocity norm for the conformable system in the first instants.

The key to interpretation lies in Figure \ref{fig3}-(b), which illustrates the time transformation $\tau = t^\alpha/\alpha$. The nonlinearity of this transformation, particularly at low values of $t$, explains all the observed differences between the two representations. The conformable system is not fundamentally different; it is simply the classical system "seen" through a nonlinear clock.

Finally, Figure \ref{fig4}, presenting the two-dimensional projections of the attractor, confirms that the fundamental geometric structure of the attractor is preserved. The trajectories in the $(x,y)$, $(x,z)$ and $(y,z)$ planes follow the same paths in phase space, confirming that the temporal reparametrization does not affect the qualitative structure of the dynamics.

These visual results convincingly demonstrate that the conformable derivative does not create new physics but simply provides an alternative representation of classical dynamics, equivalent to a nonlinear change of time scale. This interpretation is consistent with our theoretical analysis and challenges the claim that the conformable formalism introduces truly fractional or non-local behaviors.

\begin{proposition}[Temporal Rescaling of Chaotic Dynamics] The conformable Lorenz system does not exhibit a fundamentally new type of chaos. Instead, the observed difference stems from the concave time scaling, which distorts the underlying dynamics. The effect is not physical memory but an illusion of nonlinearity.
\end{proposition}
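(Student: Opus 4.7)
The plan is to reduce the proposition directly to the equivalence already established in this section, and then to unpack what the equivalence implies about chaos. First, I would invoke the Proposition on conformable dynamical systems with the particular choice $\psi_\alpha(t)=t^{1-\alpha}$ and $\phi_\alpha(t)=t^\alpha/\alpha$, identifying the conformable Lorenz system with the classical Lorenz system in the rescaled variable $\tau=\phi_\alpha(t)$. Because this identification is an equality of ODEs (not merely a qualitative analogy), every trajectory of the conformable system is the pull-back of a classical Lorenz trajectory along $\phi_\alpha^{-1}$, and in particular the two flows are smoothly conjugate on $(0,\infty)$.

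Second, I would record the relevant analytic properties of the reparametrization: for $0<\alpha<1$, the map $\phi_\alpha:(0,\infty)\to(0,\infty)$ is a strictly increasing, concave $C^\infty$ diffeomorphism. This gives a topological (indeed smooth) conjugacy of the two flows, which by the preceding invariance proposition transfers all qualitative dynamical structures: fixed points coincide, the Jacobian spectra at equilibria are identical, periodic orbits correspond bijectively (with rescaled periods), and sensitivity to initial conditions is preserved. In particular, the classical Lorenz flow has positive topological entropy and a strange attractor, and these properties carry over verbatim to the conformable flow, so no qualitatively new chaotic mechanism can appear.

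Third, I would make the phrase ``concave time scaling distorts the underlying dynamics'' quantitative. The local time-stretch factor $d\tau/dt=t^{\alpha-1}$ diverges as $t\to 0^+$ and decays to zero as $t\to\infty$; consequently, if $\lambda$ denotes the maximal Lyapunov exponent of the classical system in $\tau$-time, the instantaneous separation rate observed in $t$-time is $\lambda\, t^{\alpha-1}$. This accounts for both the early acceleration and the later slowdown visible in the figures without producing any new invariant. To separate this effect from ``physical memory,'' I would invoke the pointwise relation \eqref{psi_C}: the conformable right-hand side depends only on the current state $\mathbf{x}(t)$, with no integral kernel over the past, in sharp contrast to the Caputo formulation. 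Hence the difference between the classical and conformable trajectories is entirely attributable to a local, state-free rescaling of time.

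The main obstacle is not technical but definitional: the proposition is phrased in interpretive language (``new type of chaos,'' ``illusion of nonlinearity''). The genuine work is to translate these phrases into precise mathematical statements, namely (i) smooth conjugacy of flows up to time reparametrization, and (ii) absence of a memory kernel in the generator, and then to observe that both are immediate consequences of the structural equivalence already proved. Once this translation is in place, the argument collapses to applying the earlier equivalence and invariance results, with the Lyapunov-exponent rescaling providing the concrete quantitative content.
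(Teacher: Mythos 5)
Your proposal is correct and follows essentially the same route as the paper, which presents this proposition as an immediate consequence of the change-of-variable equivalence for the conformable Lorenz system, the invariance-of-qualitative-properties proposition (the increasing diffeomorphism $\tau=\phi_\alpha(t)=t^\alpha/\alpha$ preserving fixed points, Jacobian spectra, periodic orbits and sensitivity to initial conditions), and the pointwise, memory-free form $D^\alpha_\psi \mathbf{x}(t)=\psi_\alpha(t)\mathbf{x}'(t)$ contrasted with the Caputo kernel. Your quantitative addition that the observed separation rate in $t$-time is modulated by $d\tau/dt=t^{\alpha-1}$ is a useful sharpening the paper leaves implicit (though phrase it carefully, since it makes asymptotic exponents measured in real time degenerate rather than carrying over ``verbatim''), but it does not alter the argument.
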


Thus, any apparent fractional behavior in conformable chaos is a consequence of time distortion rather than fractional calculus in the integral sense (e.g., Caputo or Riemann-Liouville derivatives).

\subsection{The Fractional Lorenz Attractor and the Caputo Derivative}

To complete our comparative study, we now consider the Lorenz system with a true fractional derivative in the Caputo sense. Unlike the conformable derivative, the Caputo derivative introduces long-term memory into the system, fundamentally altering its dynamics.

The Caputo derivative of order $\alpha$ is defined by:
\[
D^\alpha_t f(t) = \frac{1}{\Gamma(1-\alpha)} \int_0^t \frac{f'(\tau)}{(t-\tau)^\alpha} d\tau, \quad 0 < \alpha < 1.
\]

The fractional Lorenz system is written:
\begin{equation}\label{lorenz_caputo}
\begin{cases}
D^\alpha_t x = \sigma(y - x) \\
D^\alpha_t y = \rho x - y - xz \\
D^\alpha_t z = xy - \beta z
\end{cases}
\end{equation}
with the same parameters and initial conditions as before.

\subsubsection*{Numerical Method for the Caputo Derivative}

Numerical solution of the fractional system requires specialized methods. We use the Adams-Bashforth-Moulton Predictor-Corrector algorithm for fractional differential equations:

\begin{equation*}
x_{n+1} = x_0 + \frac{1}{\Gamma(\alpha)} \sum_{j=0}^n \beta_{j,n+1} \cdot \sigma(y_j - x_j)
\end{equation*}
\begin{equation*}
y_{n+1} = y_0 + \frac{1}{\Gamma(\alpha)} \sum_{j=0}^n \beta_{j,n+1} \cdot (\rho x_j - y_j - x_j z_j)
\end{equation*}
\begin{equation*}
z_{n+1} = z_0 + \frac{1}{\Gamma(\alpha)} \sum_{j=0}^n \beta_{j,n+1} \cdot (x_j y_j - \beta z_j)
\end{equation*}

where the coefficients $\beta_{j,n+1}$ capture the memory effect of the fractional derivative (see \ref{fig5} and \ref{fig6}).

\begin{figure}[H]
        \centering
        \includegraphics[width=\textwidth]{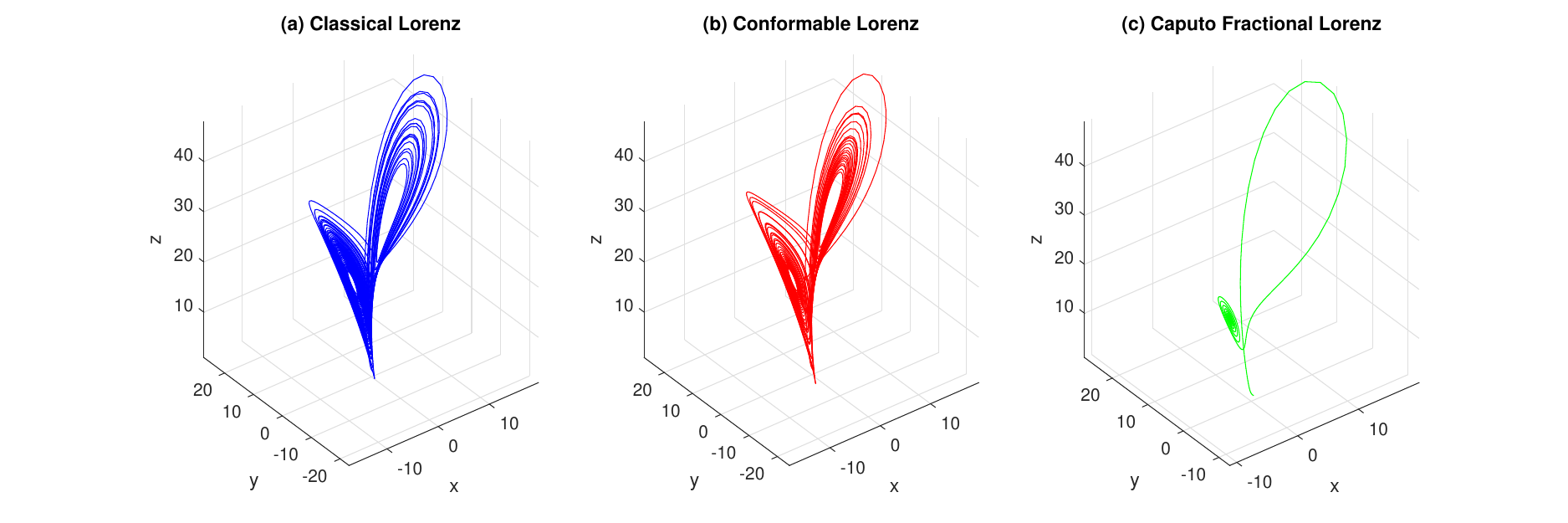}
        \caption{Comparative analysis of Lorenz attractors for $\alpha = 0.9$. (a) Classical derivative (reference). (b) Conformable derivative: a kinematically distorted version of (a). (c) Caputo fractional derivative: a structurally altered attractor exhibiting genuine memory effects and fractal roughness.}
        \label{fig5}
\end{figure}

\begin{figure}[H]
        \centering
        \includegraphics[width=0.9\textwidth]{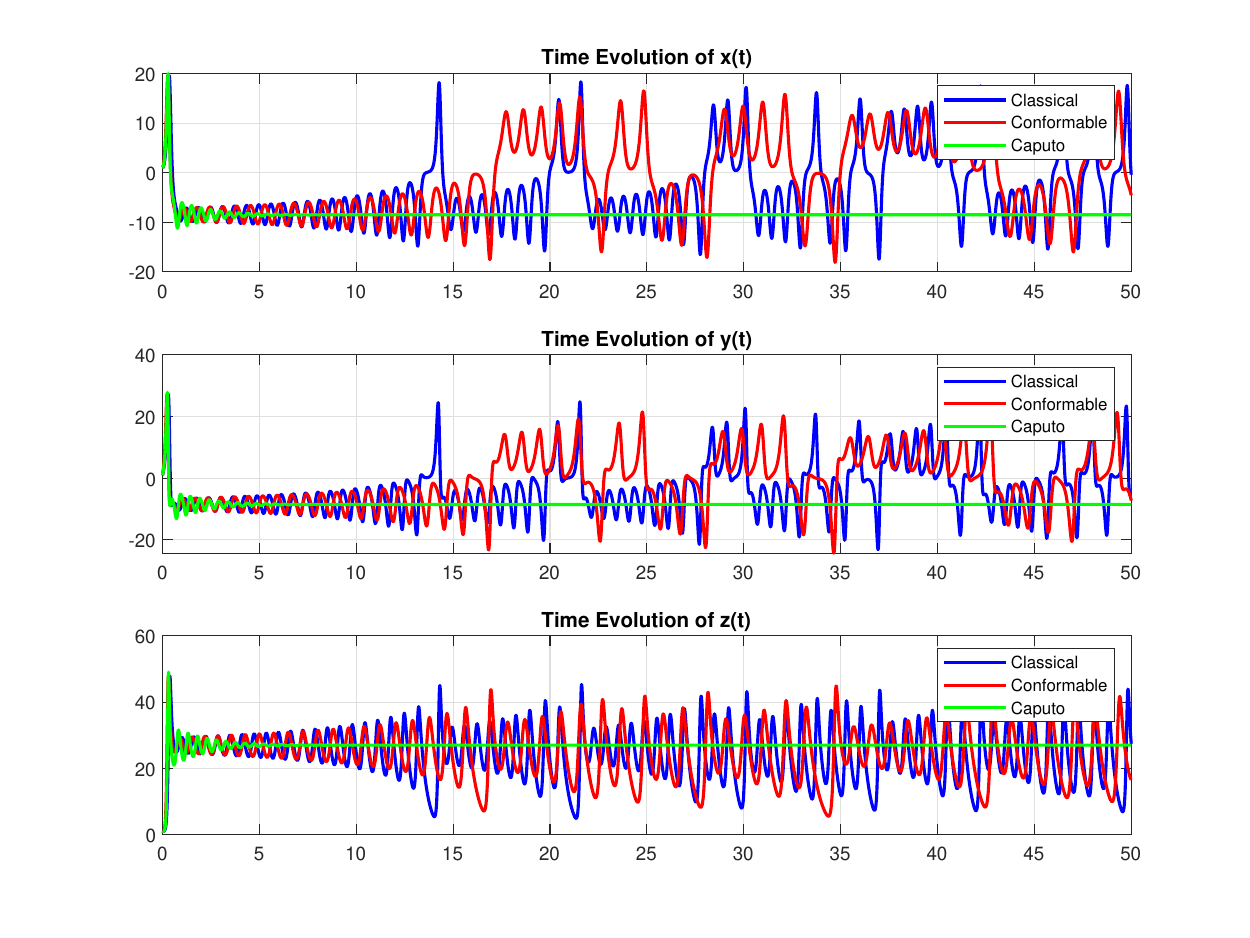}
        \caption{Temporal evolution of the variable $x(t)$ for the classical (blue), conformable (red), and Caputo fractional (green) Lorenz systems. The conformable trajectory is a time-warped version of the classical one, while the Caputo system shows authentic memory effects, including state persistence and amplitude modulation.}
        \label{fig6}
\end{figure}

\section*{Interpretation of Numerical Results}

The numerical simulations of the classical, conformable, and fractional (Caputo) Lorenz systems reveal fundamental differences that confirm our theoretical analysis and illuminate the distinct nature of these three formulations.

\begin{description}
\item Figure \ref{fig5} presents an immediately revealing visual comparison. The classical attractor (\ref{fig5}-a) exhibits the well-known "butterfly" structure, with its two characteristic lobes and the chaotic trajectory alternating between them. The conformable attractor (\ref{fig5}-b) shows a geometrically similar structure but \textbf{temporally distorted}: the loops are tighter and the evolution appears accelerated, particularly in the early phases. This distortion is purely kinematic and results from the reparametrization $\tau = t^\alpha/\alpha$.

In contrast, the Caputo attractor (\ref{fig5}-c) reveals a \textbf{profound structural modification}. The geometry of the attractor is altered: the trajectories show a fractal "roughness" characteristic of systems with memory, the lobes are less distinct, and the overall spatial organization differs. This transformation is not a simple temporal distortion but an authentic \textbf{modification of the underlying dynamics}.

\item Figure \ref{fig6}, showing the temporal evolution of the variable $x(t)$, is particularly illuminating. The classical and conformable systems essentially follow the same evolution (blue and red lines), shifted in time according to the transformation $\tau(t)$. Their oscillatory behavior is synchronized, confirming that the conformable derivative preserves the dynamics while changing its time scale.

The Caputo system (green line), on the other hand, exhibits radically different behavior:
\begin{itemize}
\item[•] \textbf{Memory effect}: The trajectory shows increased persistence in each state, with slower transitions between the lobes of the attractor
\item[•] \textbf{Amplitude modulation}: The oscillations show a modulated envelope characteristic of systems with long-term memory
\item[•] \textbf{Multiple scales}: Natural emergence of different time scales due to the integro-differential nature of the operator
\end{itemize}

\end{description}

This analysis clearly demonstrates that:

\begin{enumerate}
\item The conformable derivative only produces a \textbf{temporal reparametrization} of the classical system

\item The Caputo derivative introduces a \textbf{genuine physical modification} of the dynamics

\item The purported "fractional effects" of conformable derivatives are \textbf{modeling artifacts}

\item Only integro-differential derivatives (Caputo, RL) capture authentic memory phenomena
\end{enumerate}

This distinction is crucial for faithful physical modeling. The conformable derivative may be useful as a computational tool, but should not be confused with a true fractional derivative for modeling systems with memory.

\section*{Synthetic Conclusion}

This work clarifies the mathematical nature of the conformable derivative by showing that it can be systematically interpreted as a classical derivative under a nonlinear time reparametrization. Within this framework, conformable models preserve the qualitative structure of the underlying classical dynamics while modifying the temporal scale of evolution.

In contrast with established fractional derivatives such as those of Caputo or Riemann-Liouville, the conformable derivative does not incorporate nonlocal memory effects. Consequently, its use is not suited for modeling phenomena where hereditary behavior plays a fundamental role.

These observations suggest that conformable operators are best understood as weighted or reparametrized derivatives rather than genuine fractional derivatives. When employed with this interpretation, they may serve as useful computational tools. However, for the modeling of memory-dependent processes, classical fractional calculus remains the appropriate framework.

\end{document}